\documentclass[10pt]{article}

\usepackage{amsthm,amsmath,amssymb,amsbsy,amsfonts, enumitem,graphicx,floatrow}

\usepackage[usenames,dvipsnames,svgnames,table]{xcolor}

\usepackage[ocgcolorlinks,bookmarks=true]{hyperref}

\hypersetup{plainpages = false,
            pdfpagelayout = OneColumn,
            breaklinks = true,
            colorlinks = true,
            linkcolor = NavyBlue,
            urlcolor  = blue,
            citecolor = Brown,
            anchorcolor = green}

\usepackage[hmargin=1.5in,vmargin=1.5in]{geometry}

\setlength\arraycolsep{1.4pt} 


\makeatletter
\def\blfootnote{\gdef\@thefnmark{}\@footnotetext}
\makeatother

\newtheorem{theorema}{Theorem}

\newtheorem*{conjecturea}{Saff-Varga Width Conjecture}
\newtheorem*{conjectureb}{Modified Width Conjecture}

\newtheorem{theorem}{Theorem}[section]
\newtheorem{lemma}[theorem]{Lemma}

\theoremstyle{definition}
\newtheorem{definition}[theorem]{Definition}


\newtheorem*{acknow}{Acknowledgements}

\numberwithin{equation}{section}

\def\C{\mathbb{C}}
\def\R{\mathbb{R}}

\DeclareMathOperator{\erfc}{erfc}
\DeclareMathOperator{\re}{Re}
\DeclareMathOperator{\im}{Im}

\DeclareMathOperator{\len}{length}

\newcommand{\const}{\text{const}}

\newtheorem{rhp}[theorem]{Riemann-Hilbert Problem}
\newtheorem{condition}[theorem]{Condition}


\begin{document}

\graphicspath{{pdfimgs/}}

\title{Newman-Rivlin asymptotics \\ for partial sums of power series}
\author{Antonio R. Vargas}
\date{}

\blfootnote{This research was supported in part by a Killam Predoctoral Scholarship.}

\maketitle

\begin{abstract}
We discuss analogues of Newman and Rivlin's formula concerning the ratio of a partial sum of a power series to its limit function and present a new general result of this type for entire functions with a certain asymptotic character.  The main tool used in the proof is a Riemann-Hilbert formulation for the partial sums introduced by Kriecherbauer et al.  This new result makes some progress on verifying a part of the Saff-Varga Width Conjecture concerning the zero-free regions of these partial sums.
\end{abstract}

\section{Introduction}
Let $f$ be an entire function and let
\[
	p_n(z) = \sum_{k=0}^{n} \frac{f^{(k)}(0)}{k!} z^k
\]
denote the $n^\text{th}$ partial sum of its power series.  A problem of some interest is to determine the asymptotic behavior of the zeros of these partial sums as $n \to \infty$.

An early example of a result of this type is the paper \cite{szego:exp} by Szeg\H{o} which considered the zeros of the partial sums of the function $f(z) = \exp(z)$.  Among other things, Szeg\H{o} showed that the zeros of the re-scaled partial sums $p_n(nz)$ approach a specific limit curve defined by $|ze^{1-z}| = 1$ with $\re z \leq 1$.  This curve has come to be known as the Szeg\H{o} curve.  A series of other results followed in this vein, including the very general contributions of Rosenbloom in his thesis \cite{rosen:thesis,rosen:distrib}.

At the suggestion of Varga, Iverson published a paper \cite{iverson:zeros} containing tables of numerical values and a plot of the zeros of various partial sums of $f(z) = \exp(z)$.  Iverson remarked that there seemed to be a large zero-free region surrounding the positive real axis which was not yet described by the available literature.  This zero-free region was subsequently investigated by Newman and Rivlin in \cite{newriv:expzeros,newriv:expzeroscorrect} and in a more general setting by Saff and Varga in \cite{sv:zerofree}.  In the latter it was shown that no partial sum has a zero in the parabolic region
\[
	\{x+iy : y^2 \leq 4(x+1) \text{ and } x > -1\}.
\]
Conversely, the first paper by Newman and Rivlin contained the following theorem.

\begin{figure}[!hb]
\floatbox[{\capbeside\thisfloatsetup{capbesideposition={right,center}, capbesidewidth=0.3\textwidth}}]{figure}[\FBwidth]
{\caption{Zeros of the first $50$ partial sums of $\exp(z)$.  The curve $y^2 = 4(x+1)$ is shown as a solid line.}\label{ezplot}}
{\includegraphics[width=0.65\textwidth]{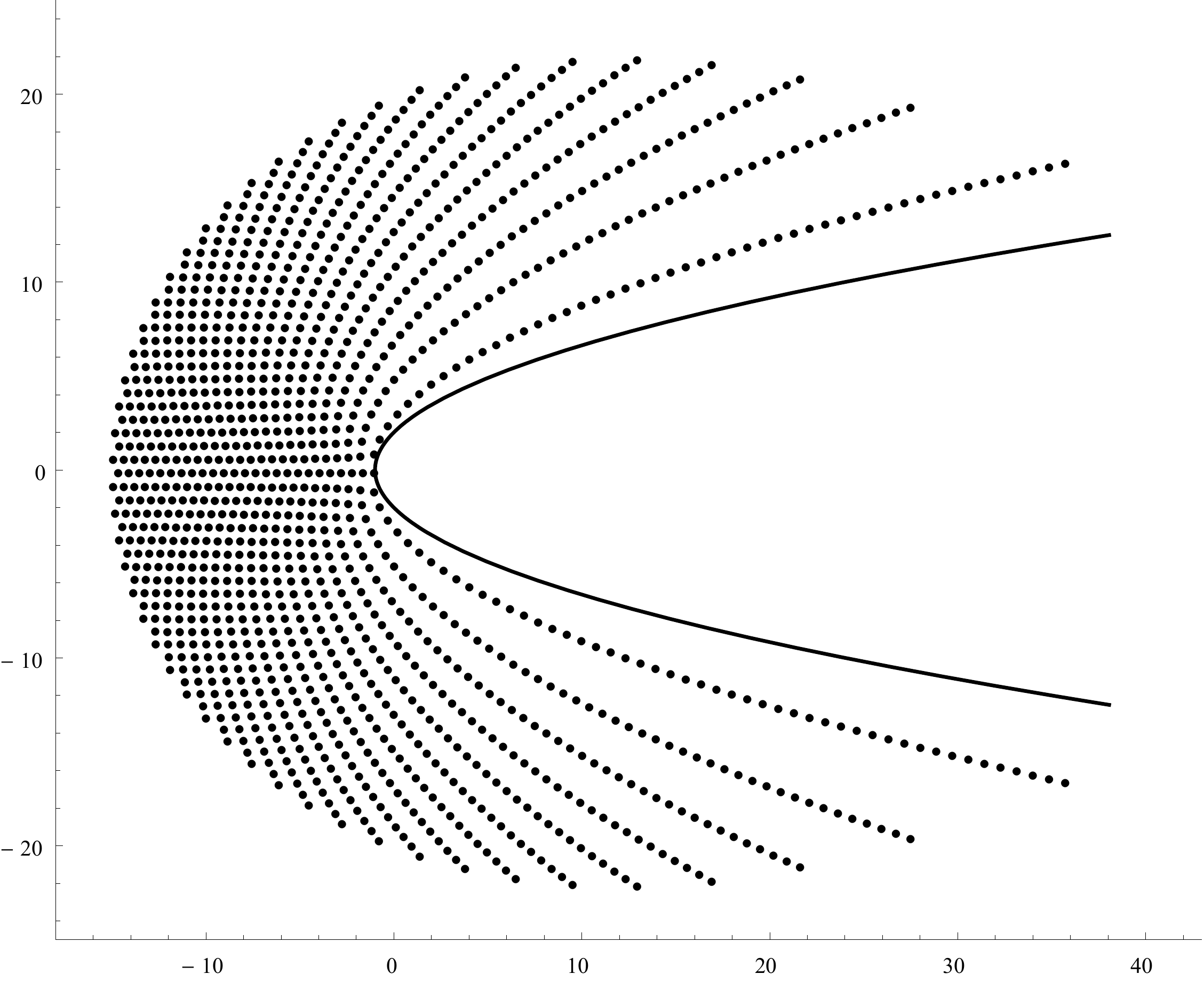}}
\end{figure}

\begin{theorema}[Newman-Rivlin]
\label{newrivtheo}
Let
\[
	p_n(z) = \sum_{k=0}^{n} \frac{z^k}{k!}.
\]
Then
\[
	\lim_{n \to \infty} \frac{p_n(n+w\sqrt{n})}{\exp(n+w\sqrt{n})} = \frac{1}{2} \erfc\!\left(w/\sqrt{2}\right)
\]
uniformly on any compact set in $\im w \geq 0$.
\end{theorema}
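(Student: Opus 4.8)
The plan is to run the classical saddle‑point (Laplace‑method) argument on the incomplete‑gamma representation of the remainder, being careful that every error term is uniform over a fixed compact set $K \subseteq \{\im w \ge 0\}$. I would start from the identity
\[
	\frac{p_n(z)}{e^z} = \frac{1}{n!}\int_z^{\infty} e^{-t}\, t^n \, dt ,
\]
in which the path of integration is the horizontal ray $\{z+\sigma : \sigma \ge 0\}$. This drops out of $\frac{d}{dz}\bigl(e^{-z}p_n(z)\bigr) = -e^{-z}z^{n}/n!$ together with $e^{-z}p_n(z) \to 0$ as $\re z \to +\infty$; since $t^n$ is a polynomial, $e^{-t}t^n$ is entire and Cauchy's theorem allows us to use the horizontal ray and, later, to deform it within any right half‑plane.

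Next I would put $t = n + s\sqrt n$ and $z = n + w\sqrt n$ (so that $t=n$, the saddle of $e^{-t}t^n$, becomes $s=0$), obtaining
\[
	\frac{p_n(z)}{e^z} = \frac{n^{n} e^{-n}\sqrt n}{n!}\int_{w}^{\infty} e^{\varphi_n(s)}\, ds, \qquad \varphi_n(s) = n\log\!\Bigl(1 + \tfrac{s}{\sqrt n}\Bigr) - s\sqrt n ,
\]
the integral now along the ray $\{w + \sigma : \sigma \ge 0\}$, which stays off the cut $(-\infty,-\sqrt n\,]$ once $n$ is large because $\re w$ is bounded on $K$. Two properties of $\varphi_n$ do the work: the uniform Taylor estimate $\varphi_n(s) = -s^2/2 + O\bigl(|s|^3/\sqrt n\bigr)$ valid for $|s|$ small against $\sqrt n$, and the identity $\frac{d}{d\sigma}\re\varphi_n(w+\sigma) = -\,n\,\re\frac{w+\sigma}{n+(w+\sigma)\sqrt n}$, from which one reads off (for large $n$, uniformly on $K$) that $\re\varphi_n$ has a single maximum along the ray, sitting near $s=0$ and taking a bounded value there, and that it decays quadratically and then linearly as $\sigma$ grows.

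The core is the usual two‑piece split $\int_w^\infty = \int_{|s-w|\le \log n} + \int_{|s-w| > \log n}$. On the near piece $|s| \le |w| + \log n$, so $|s|^3/\sqrt n = O\bigl((\log n)^3/\sqrt n\bigr) \to 0$ uniformly, whence $e^{\varphi_n(s)} = e^{-s^2/2}\bigl(1 + o(1)\bigr)$ and, after replacing the truncation by the full ray, $\int_{|s-w|\le\log n} e^{\varphi_n(s)}\, ds \to \int_{w}^{\infty} e^{-s^2/2}\, ds = \sqrt{2\pi}\cdot\tfrac12\erfc\!\bigl(w/\sqrt 2\bigr)$, the last equality holding for real $w$ by the definition of $\erfc$ and hence for complex $w$ by analytic continuation (with the integral taken along the ray). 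On the far piece the bounds on $\re\varphi_n$ give $\re\varphi_n(s) \le -c(\log n)^2 + O(\log n)$ with monotone decay beyond, so it contributes $O\bigl(e^{-c(\log n)^2/2}\bigr) = o(1)$, uniformly on $K$. Multiplying by $n^{n}e^{-n}\sqrt n/n! \to 1/\sqrt{2\pi}$ (Stirling) gives the stated limit, uniformly on $K$.

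I expect the main obstacle to be bookkeeping rather than a single hard estimate: one must verify that the Taylor remainder, the tail decay of $e^{\varphi_n}$ along the entire (infinite) ray — splitting $\sigma \in [\log n, \sqrt n]$, where $\re\varphi_n \lesssim -\sigma^2$, from $\sigma \ge \sqrt n$, where $\varphi_n' \lesssim -\sqrt n$ — and the passage from the truncated to the full Gaussian integral are all uniform over $w \in K$; and since for $\re w < 0$ the ray begins to the left of the saddle, one should invoke the unimodality of $\re\varphi_n$ rather than pretend the contour passes through $t=n$. Finally, the hypothesis $\im w \ge 0$ is only a convenience — it keeps the ray clear of the logarithm's branch cut for every $n$ — and the conjugation symmetries $\overline{p_n(z)/e^{z}} = p_n(\bar z)/e^{\bar z}$ and $\overline{\erfc(w/\sqrt 2)} = \erfc(\bar w/\sqrt 2)$ give the corresponding statement in the lower half‑plane.
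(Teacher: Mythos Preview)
Your argument is correct and is essentially the classical one: represent $e^{-z}p_n(z)$ as an incomplete gamma integral, shift to saddle-point coordinates $t=n+s\sqrt n$, and run Laplace's method with a near/far split, picking up the Stirling factor at the end. The bookkeeping you flag (uniform Taylor remainder, tail decay in two regimes, unimodality of $\re\varphi_n$ along the ray when $\re w<0$) is exactly what is needed, and your handling of it is sound.

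This is, however, a genuinely different route from the paper's. The paper does not prove Theorem~\ref{newrivtheo} directly; it cites it and instead proves the general Theorem~\ref{maintheo}, whose specialization to $f=\exp$ recovers (a version of) Newman--Rivlin. That proof does not use the incomplete-gamma representation at all. Instead it builds the Cauchy integral $F_n(z)$ in \eqref{fdef}, poses a scalar Riemann--Hilbert problem on a contour $\Gamma=\Gamma_1\cup\Gamma_2$ encircling the saddle at $z=1$, and shows via Lemmas~\ref{gintlemma}--\ref{fngnapproxlemma} that $F_n$ is uniformly approximated near $z=1$ by a local parametrix $P_n$ built from the function $h(\zeta)=\tfrac{1}{2\pi i}\int_{\R} e^{-u^2}(u-\zeta)^{-1}\,du$, which is then identified with $\tfrac12 e^{-\zeta^2}\erfc(-i\zeta)$. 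Your approach is shorter and more elementary for $\exp$, and it delivers the full half-plane $\im w\ge 0$ (indeed both half-planes by conjugation), whereas the paper's Theorem~\ref{maintheo} is stated only for $\re w<0$. What the paper's method buys is generality: it never uses anything specific to $\exp$ beyond the asymptotic form \eqref{fgrowth}, so the same machinery handles any entire $f$ of that shape, where no closed-form integral for $p_n/f$ is available.
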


Here $\erfc$ refers to the complementary error function, which is defined in \eqref{erfc}.  As a consequence of this result it is possible to show that, for any positive constants $K$, $x_0$, and $\epsilon$, the set
\[
	\{x+iy : |y| \leq K x^{1/2+\epsilon} \text{ and } x \geq  x_0 \}
\]
contains infinitely many zeros of the partial sums of $\exp(z)$.  It is after this theorem that the present paper is named.

Prompted by these results and by additional numerical computations, Saff and Varga made the following conjecture (see \cite[p.\ 5]{esv:sections} and the references therein).

\begin{conjecturea}
\label{widthconjecture1}
Consider the ``parabolic region''
\[
	S_0(\tau) = \left\{z = x+iy : |y| \leq K x^{1-\tau/2}, \, x \geq x_0\right\},
\]
where $K$ and $x_0$ are fixed positive constants, and consider also the regions $S_\theta(\tau)$ obtained by rotations of $S_0(\tau)$:
\[
	S_\theta(\tau) = e^{i\theta} S_0(\tau).
\]
Given any entire function $f$ of positive finite order $\lambda > \tau$, denote its $n^\text{th}$ partial sum by $p_n(z)$.  There exists an infinite sequence of positive integers $N$ such that there is no $S_\theta(\tau)$ which is devoid of all zeros of all partial sums $p_n(z)$, $n \in N$.
\end{conjecturea}

Essentially this conjectures that any region free from zeros of the partial sums must not be too wide---this width depending on the exponential order of the function in question.  In the numerical evidence and the examples which have been investigated up to this point the zeros actually cluster densely together and ``fill'' up most of the plane, and there are only a finite number of exceptional arguments where these zero-free regions exist.  Further, these exceptional arguments only occur in the direction of maximal exponential growth of the function in question.  It is these arguments which we are interested in in the present paper.

To capture these observations, Edrei, Saff, and Varga proposed a modified Width Conjecture in \cite[p.\ 6]{esv:sections}.

\begin{conjectureb}
\label{widthconjecture2}
Let $f$ be an entire function of positive, finite order $\lambda$.  We can find an infinite sequence of positive integers $N$ and a finite number of exceptional arguments $\theta_1,\theta_2,\ldots,\theta_q$ such that
\begin{enumerate}[label=\textnormal{(\alph*)}]
\item For any argument $\theta \neq \theta_j$, $j = 1,2,\ldots,q$, it's possible to find a positive sequence $\rho_n$, $n \in N$, with $\rho_n \to \infty$ and $\rho_n = O(n^{2/\lambda})$ such that, for every fixed $\epsilon > 0$, the number of zeros of the partial sum $p_n(z)$ in the disk
\[
	\left|z - \rho_n e^{i\theta}\right| \leq \rho_n n^{-1+\epsilon}
\]
tends to infinity as $n \to \infty$, $n \in N$.

\item For any exceptional argument $\theta_j$ it's possible to find an integer $m \geq 2$ and a positive sequence $\rho_n$, $n \in N$, with $\rho_n \to \infty$ and $\rho_n = O(n^{2/(\lambda m)})$ such that, for every fixed $\epsilon > 0$, the number of zeros of the partial sum $p_n(z)$ in the disk
\[
	\left|z - \rho_n e^{i\theta_j}\right| \leq \rho_n n^{-1/m + \epsilon}
\]
tends to infinity as $n \to \infty$, $n \in N$.
\end{enumerate}
\end{conjectureb}

Results of the same type as Theorem \ref{newrivtheo} have so far been very important in verifying the Width Conjectures in these directions of maximal exponential growth.  Indeed, one can check that Theorem \ref{newrivtheo} verifies part (b) of the modified conjecture for the case of the exponential function with $\lambda = 1$, $m = 2$, $\theta = 0$, and $\rho_n = n$.  The following analogue of Theorem \ref{newrivtheo} is proved in \cite[p.\ 10]{esv:sections} also to verify part (b) of the modified conjecture at the exceptional argument $\theta = 0$ for the Mittag-Leffler functions.

\begin{theorema}[Edrei-Saff-Varga]
\label{esvtheo}
Let
\[
	E_{1/\lambda}(z) = \sum_{k=0}^\infty \frac{z^k}{\Gamma(k/\lambda + 1)}
\]
be the Mittag-Leffler function of positive, finite order $\lambda$.  Let $p_n(z)$ be its $n^\text{th}$ partial sum and let $r_n = (n/\lambda)^{1/\lambda} e^{1/(2n)}$.  Then
\[
	\lim_{n \to \infty} \frac{p_n\!\left(r_n \left(1+w\sqrt{2/(\lambda n)} \right)\right)}{\left(1+w\sqrt{2/(\lambda n)}\right)^n E_{1/\lambda}(r_n)} = \frac{1}{2} \exp\!\left(w^2\right) \erfc(w)
\]
uniformly for $w$ in any compact set in $\C$.
\end{theorema}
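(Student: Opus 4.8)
\emph{Sketch of the argument.}
The plan is to reduce the statement to a steepest-descent (saddle-point) analysis of a contour-integral representation of the remainder $R_n := E_{1/\lambda} - p_n$ (so $R_n(z) = \sum_{k>n} z^k/\Gamma(k/\lambda+1)$). First I would record, for $\rho > |z|$, the representation
\[
R_n(z) = \frac{z^{n+1}}{2\pi i}\oint_{|t|=\rho}\frac{E_{1/\lambda}(t)}{t^{n+1}(t-z)}\,dt,
\]
a routine consequence of the residue theorem, the poles at $t=z$ and $t=0$ accounting for $E_{1/\lambda}(z)$ and $-p_n(z)$ in accordance with $R_n = E_{1/\lambda} - p_n$. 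Fix $z = z_n = r_n(1+w\delta_n)$ with $\delta_n := \sqrt{2/(\lambda n)}$. Into this integral I would insert the classical uniform asymptotics $E_{1/\lambda}(t) = \lambda t^{\lambda-1}e^{t^{\lambda}}(1+O(1/t))$, valid in a sector about the positive real axis, together with the estimate that the rest of the circle contributes a relatively exponentially small amount --- either because $|e^{t^{\lambda}}|$ is exponentially smaller there than at the saddle, or because $E_{1/\lambda}(t)$ is of merely polynomial size --- reducing the problem to
\[
R_n(z_n) \sim \frac{\lambda z_n^{n+1}}{2\pi i}\int \frac{e^{h(t)}}{t-z_n}\,dt, \qquad h(t) := t^{\lambda} - (n+2-\lambda)\log t,
\]
the integral over a deformation of the circle near $\arg t = 0$.

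The next step is the saddle-point analysis. The function $h$ has a saddle at $t_s$ with $t_s^{\lambda} = (n+2-\lambda)/\lambda$, which agrees with $r_n$ to within $o(r_n\delta_n)$, and $h''(t_s) = \lambda^2 t_s^{\lambda-2} \sim \lambda n / r_n^2 > 0$, so the steepest-descent path through $t_s$ is locally vertical. Parametrizing $t = t_s + i r_n\delta_n\,\sigma$ turns $h(t) - h(t_s)$ into $-\sigma^2 + o(1)$ and $t - z_n$ into $r_n\delta_n(i\sigma - w) + o(r_n\delta_n)$, so the saddle yields
\[
R_n(z_n) \sim \frac{\lambda z_n^{n+1} e^{h(t_s)}}{2\pi}\int_{-\infty}^{\infty}\frac{e^{-\sigma^2}}{i\sigma - w}\,d\sigma,
\]
and the model integral equals, via the Faddeeva function, $\pi e^{w^2}\erfc(-w)$ for $\re w < 0$ (and its analytic continuation for other $w$). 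Here a bookkeeping point intervenes: the pole $t = z_n$ sits at scaled distance $|w|$ from the saddle, i.e.\ within the saddle's width, and crosses the steepest-descent contour as $\re w$ passes through $0$; for $\re w > 0$ the deformation of the circle picks up the residue $E_{1/\lambda}(z_n)$, and the two regimes are reconciled using $\erfc(w) + \erfc(-w) = 2$. It then remains to collect the algebraic factors: using $E_{1/\lambda}(r_n) \sim \lambda r_n^{\lambda-1}e^{r_n^{\lambda}}$, $r_n^{\lambda} = (n/\lambda)e^{\lambda/(2n)} = n/\lambda + \tfrac12 + O(1/n)$, and Stirling's formula, one checks that $z_n r_n^n e^{h(t_s)}/\bigl(r_n^{\lambda-1}e^{r_n^{\lambda}}\bigr) \to 1$ --- it is precisely the factor $e^{1/(2n)}$ built into $r_n$ that forces this prefactor to tend to $1$ rather than to $0$ --- so that $R_n(z_n)/\bigl((z_n/r_n)^n E_{1/\lambda}(r_n)\bigr) \to \tfrac12 e^{w^2}\erfc(-w)$. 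A parallel but easier computation (asymptotics of $E_{1/\lambda}$ at the two nearby points $z_n$, $r_n$, plus a Taylor expansion) gives $E_{1/\lambda}(z_n)/\bigl((z_n/r_n)^n E_{1/\lambda}(r_n)\bigr) \to e^{w^2}$; subtracting and using $\erfc(-w) = 2 - \erfc(w)$ yields the claimed limit $\tfrac12 e^{w^2}\erfc(w)$.

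The hard part will be upgrading this heuristic to a statement that is uniform for $w$ in a compact subset of $\C$, with every error term controlled: (i) the off-saddle estimate on the circle, which needs the uniform Mittag-Leffler asymptotics across the exponentially large sector and the subdominant sector simultaneously, as well as the freedom to deform the circle through $t_s$; (ii) the uniform control of the pole-versus-contour position and of the residue bookkeeping for $w$ ranging over a compact set that straddles the imaginary axis --- alternatively one could establish the limit only on, say, $\{\re w \le -1\}$, where the pole never meets the deformed contour, and then extend to all compact sets by Vitali's theorem, using that the ratios are analytic in $w$ and, by the same estimates, locally uniformly bounded; and (iii) carrying the Taylor expansions of $h$, of $(1+w\delta_n)^{\lambda}$, and of $r_n^{\lambda}$ to enough precision that the $w^2$ in $e^{w^2}\erfc(\pm w)$ and the $e^{1/(2n)}$ in $r_n$ each emerge for exactly the right reason. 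An alternative route that avoids the contour and branch-cut issues, at the cost of a Laplace-method-for-sums lemma with complex parameters, is to estimate $R_n(z_n) = \sum_{k>n} z_n^k/\Gamma(k/\lambda+1)$ directly: by Stirling the summand near its (complex) peak $k^* = \lambda z_n^{\lambda} = n + w\sqrt{2\lambda n} + O(1)$ is comparable to $M_n\exp\!\bigl(-(k-k^*)^2/(2\lambda n)\bigr)$, and comparing the sum over $k > n$ with the corresponding Gaussian integral reproduces the same $\erfc(-w)$ with prefactor $M_n\sqrt{\pi\lambda n/2}$.
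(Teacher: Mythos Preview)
The paper does not contain a proof of this statement: Theorem~B is quoted from the Edrei--Saff--Varga monograph (the line preceding the statement reads ``The following analogue \ldots\ is proved in \cite[p.~10]{esv:sections}''), so there is no in-paper argument to compare your proposal against.

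That said, your sketch is sound and is essentially the classical route to such results: write the remainder as a Cauchy integral, replace $E_{1/\lambda}$ by its leading asymptotic $\lambda t^{\lambda-1}e^{t^{\lambda}}$ on the relevant arc, and carry out a steepest-descent analysis in which the pole $t=z_n$ sits at the scale of the saddle width. Your handling of the pole crossing the contour (and the use of $\erfc(w)+\erfc(-w)=2$, or alternatively Vitali's theorem to pass from a half-plane to all compact sets) is the right idea, and the model integral is identified correctly. The bookkeeping of prefactors --- in particular the role of the $e^{1/(2n)}$ in $r_n$ --- is where most of the care is needed, but you have flagged this.

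It is worth noting that the present paper's own contribution, Theorem~\ref{maintheo}, is a general analogue of Theorem~B (with a slightly different normalization: $r_n=(n/\lambda)^{1/\lambda}$, denominator $f(r_n(1+w/\sqrt{n}))$, and limit $\tfrac12\erfc(w\sqrt{\lambda/2})$, stated only for $\re w<0$). Its proof starts from exactly the same Cauchy-integral representation of the remainder that you use, but packages the steepest-descent analysis as a scalar Riemann--Hilbert problem following \cite{mclaughlin:exprh}: one constructs a local parametrix $P_n$ out of the Faddeeva/$\erfc$ function and shows that the Cauchy transform $F_n$ matches it up to $o(1)$. Structurally this is the same saddle-point mechanism you describe; the RH formulation mainly serves to organize the matching and the error control uniformly in $z$, whereas your version works directly with the integral and handles the pole-versus-contour issue by an explicit residue argument. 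Either route is adequate here; the RH packaging scales somewhat more gracefully to the general hypotheses of Theorem~\ref{maintheo}.
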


More results of this type can be found in \cite{norfolk:1f1, zhel:mlsectails, mallison:expsums, norfolk:binom, zhel:lindelof}. In the spirit of these and Theorems \ref{newrivtheo} and \ref{esvtheo} we prove the following general result.

\begin{theorem}
\label{maintheo}
Let $f$ be an entire function of positive, finite order $\lambda$ with asymptotic behavior as in \eqref{fgrowth} and which satisfies Condition \ref{dfgrowth}.  Let $r_n = (n/\lambda)^{1/\lambda}$ and let $p_n(z)$ denote the $n^\text{th}$ partial sum of the power series for $f(z)$.  Then
\[
	\lim_{n \to \infty} \frac{p_{n-1}(r_n(1+w/\sqrt{n}))}{f(r_n(1+w/\sqrt{n}))} = \frac{1}{2} \erfc\!\left(w\sqrt{\lambda/2}\,\right)
\]
uniformly on any compact set in $\re w < 0$.
\end{theorem}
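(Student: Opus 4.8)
The plan is to pass to the remainder $R_{n-1}(z) := f(z) - p_{n-1}(z)$ and prove that $R_{n-1}(z)/f(z) \to \tfrac12\erfc\bigl(-w\sqrt{\lambda/2}\bigr)$ uniformly on compacta in $\re w < 0$; this is equivalent to the stated limit via the identity $\erfc(\zeta) + \erfc(-\zeta) = 2$. The starting point is the Cauchy-type representation
\[
	\frac{R_{n-1}(z)}{f(z)} = \frac{1}{2\pi i}\oint_{|t| = r_n} \frac{f(t)}{f(z)}\cdot\frac{1}{t-z}\left(\frac{z}{t}\right)^{\!n} dt,
\]
which is valid because $\re w < 0$ forces $|z| = r_n\,\bigl|1 + w/\sqrt n\bigr| < r_n$ for all large $n$, so that the pole at $t = z$ lies strictly inside the contour. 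This integral is the scalar shadow of the Kriecherbauer et al.\ Riemann--Hilbert problem for the partial sums; one could instead run a Deift--Zhou steepest-descent analysis on that RHP, but for the leading-order statement the saddle-point analysis of the integral above is more direct.

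Write the integrand as $e^{h_n(t)-h_n(z)}/(t-z)$ with $h_n(u) = \log f(u) - n\log u$. In a fixed sector about the positive axis the asymptotics \eqref{fgrowth}, together with Condition \ref{dfgrowth} controlling the error terms and the relevant derivatives of $f$, should yield the Taylor behaviour of $h_n$ near $r_n$, with the key features that the relation $r_n^\lambda = n/\lambda$ makes $r_n$ an approximate saddle point of $e^{h_n(t)}$ and that $h_n''(r_n) = \lambda n\, r_n^{-2}(1 + o(1))$. Moreover, on the circle $|t| = r_n$ the modulus $|f(t)|\,r_n^{-n}$ is maximal near $t = r_n$ and decays exponentially away from it, while the factor $|t-z|^{-1}$ only reinforces this concentration near $t\approx z\approx r_n$.

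Next I localize: split the circle into the main arc $|\arg t| \le n^{-1/2+\delta}$ (with $\delta$ small) and its complement. On the complement, \eqref{fgrowth} and Condition \ref{dfgrowth} --- which I expect to bound $f$ on the rays of non-maximal growth, so that $|f|$ does not spike elsewhere on the circle --- should bound the integrand by $e^{-cn^{2\delta}}|f(r_n)|\,r_n^{-n}$ times a polynomial factor; since $|f(z)| = |f(r_n)|(1+o(1))$ this piece is $o(1)$, uniformly for $w$ in a compact set. On the main arc I substitute $t = r_n(1 + s/\sqrt n)$ and use the expansion of $h_n$ at $r_n$ to get $h_n(t) - h_n(r_n) = \tfrac{\lambda}{2}s^2 + o(1)$ and, with $s = w$, $h_n(z) - h_n(r_n) = \tfrac{\lambda}{2}w^2 + o(1)$; noting $t - z = r_n(s-w)/\sqrt n$ and $dt = (r_n/\sqrt n)\,ds$ and extending the resulting Gaussian integral to the full vertical line through the origin (the arc covers $|s|\lesssim n^\delta \to\infty$, so the tails cost only $o(1)$) reduces the limit to
\[
	\frac{1}{2\pi i}\int_{-i\infty}^{i\infty} \frac{e^{\frac{\lambda}{2}(s^2-w^2)}}{s-w}\,ds,
\]
the vertical line being traversed upward (the orientation inherited from the circle) and missing the pole at $s = w$ exactly because $\re w < 0$. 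A rescaling $s \mapsto s\sqrt{2/\lambda}$ identifies this with the standard contour-integral representation of the scaled complementary error function and evaluates it to $\tfrac12\erfc\bigl(-w\sqrt{\lambda/2}\bigr)$, as required.

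The main obstacle is the localization step: extracting a \emph{uniform} exponential bound on the off-arc part of the contour from hypotheses that describe $f$ only asymptotically and only in a sector, and doing so with enough slack that the Taylor errors for $h_n$ on the main arc --- governed by the growth of $h_n'''$, hence by the error terms assumed for $f$ and its derivatives --- are genuinely $o(1)$ uniformly in $w$. Once the contour is correctly localized the remaining saddle-point estimate and the $\erfc$ identification are routine, and the hypothesis $\re w < 0$ is exactly what keeps $z$ inside the contour and the pole off the steepest-descent line, which explains why the statement is restricted to that half-plane.
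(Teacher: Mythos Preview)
Your approach is correct in outline but takes a genuinely different route from the paper.  The paper does \emph{not} carry out a direct saddle-point analysis of the Cauchy integral.  Instead it deforms the circle to an ``admissible'' contour that coincides with the steepest-descent path of $\varphi(s)=(s^\lambda-1-\log s^\lambda)/\lambda$ near $s=1$, introduces the comparison integrals $G_n(z)=\frac{1}{2\pi i}\int_{\gamma_\theta} e^{n\varphi(s)}\frac{ds}{s-z}$ and the local parametrix $P_n(z)=h(-i\sqrt{n}\,\psi^{-1}(z))$ (with $h$ the Cauchy transform of the Gaussian), and packages the remainder into a scalar Riemann--Hilbert problem on a composite contour $\Gamma=\partial B_{2\epsilon}(1)\cup(\gamma\setminus B_{2\epsilon}(1))$.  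The bulk of the work is then a sequence of five lemmas showing that each piece of the Sokhotski representation of the solution is $O(n^{-1/2})$ or exponentially small, after which $F_n\approx P_n$ and the explicit formula $P_n(z)=\tfrac12 e^{n\varphi(z)}\erfc(-\sqrt{n\varphi(z)})$ finishes the job.  Your classical localize-and-substitute argument bypasses the RH machinery entirely and is more elementary; the paper's approach is more structured and would more readily yield higher-order corrections or uniform asymptotics across the transition at $\re w=0$.

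One correction worth noting: you twice invoke Condition~\ref{dfgrowth} for the wrong purpose.  It does \emph{not} control $|f|$ on rays of non-maximal growth (that is the second line of \eqref{fgrowth}), nor is it what gives the Taylor expansion of $h_n$ near $r_n$ (that follows directly from the first line of \eqref{fgrowth}, since $\log(1+\delta(t))-\log(1+\delta(r_n))=o(1)$ uniformly).  In the paper, Condition~\ref{dfgrowth} is used only in Lemma~\ref{fnintlemma}, to bound the derivative $\tilde\delta_\tau(r_n,\tau)$ when estimating a principal-value integral that arises from the RH jump analysis at the intersection points $s_j$ of $\partial B_{2\epsilon}(1)$ with $\gamma_\theta$.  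Your direct approach has no such principal-value step, so---interestingly---Condition~\ref{dfgrowth} may not be needed at all in your argument; the localization and Taylor estimates you actually require come entirely from \eqref{fgrowth}.
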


We'll briefly describe how this result is connected with the Modified Width Conjecture.  If $w$ is any zero of $\tfrac{1}{2} \erfc(w\sqrt{\lambda/2})$ then $\re w < 0$, and by Hurwitz's theorem (see, e.g., \cite[p.\ 4]{marden:geom}) $p_{n-1}(z)$ has a zero $z_n$ of the form
\[
	z_n = r_n + w \frac{r_n}{\sqrt{n}} (1+o(1))
\]
for $n$ large enough.  As $n$ grows, this zero will eventually lie inside the disk
\[
	|z - r_n| \leq r_n n^{-1/2+\epsilon},
\]
where $\epsilon > 0$ is fixed.  The function $\erfc$ has infinitely many zeros, so the number of zeros in this disk will tend to infinity as $n \to \infty$.  This verifies part (b) of the Modified Width Conjecture with $\rho_n = r_n$, $\theta = 0$, and $m=2$ for this class of functions.

It is important to note that we will not claim to have shown that $\theta = 0$ is the only exceptional argument for the functions we consider or that part (a) of the conjecture has been resolved.  These questions are still open.  We also note that another variant of the Width Conjecture was proposed by Norfolk in \cite[p.\ 531]{norfolk:widthconj}, and it is straightforward to show that this too is satisfied (for a particular argument) by the above result.

To prove Theorem \ref{maintheo} we will adapt an approach involving Riemann-Hilbert methods introduced Kriecherbauer, Kuijlaars, McLaughlin, and Miller in \cite{mclaughlin:exprh} to study the zeros of the partial sums of $\exp(z)$.  In the paper the authors obtained strong asymptotics for each zero of the partial sums.  A crucial element in their approach is a Cauchy integral representation for these partial sums,
\[
	\frac{1}{2\pi i} \int_\gamma (se^{1-s})^{-n} \frac{ds}{s-z} = \begin{cases}
		-(ez)^{-n} \sum_{k=0}^{n-1} \frac{(nz)^k}{k!} & \text{for } z \text{ outside } \gamma, \\
		-(ez)^{-n} \sum_{k=0}^{n-1} \frac{(nz)^k}{k!} + (ze^{1-z})^{-n} & \text{for } z \neq 0 \text{ inside } \gamma.
		\end{cases}
\]
We make use of a more general version of this integral in \eqref{fdef} and \eqref{fnexplicit}.

\section{Definitions and Preliminaries}

Let $a,b \in \C$, $0 < \lambda < \infty$, $0 < \theta < \min\{\pi,\pi/\lambda\}$, and $\mu < 1$.  We suppose that $f$ is an entire function such that
\begin{equation}
\label{fgrowth}
f(z) = \begin{cases}
	z^a (\log z)^b \exp(z^\lambda)\bigl(1+o(1)\bigr) & \text{for } |\arg z| \leq \theta, \\
	O\!\left(\exp(\mu |z|^\lambda)\right) & \text{for } |\arg z| > \theta
	\end{cases}			
\end{equation}
as $|z| \to \infty$, with each estimate holding uniformly in its sector.  In this case $f$ is of exponential order $\lambda$.  For this $f$, let
\[
	p_n(z) = \sum_{k=0}^{n} \frac{f^{(k)}(0)}{k!} z^k.
\]

Define
\begin{equation}
\label{rndef}
	r_n = \left(\frac{n}{\lambda}\right)^{1/\lambda}.
\end{equation}
Note that for $|\arg z| \leq \theta$ we have
\begin{align}
\frac{f(r_n z)}{r_n^a (\log r_n)^b (e^{1/\lambda}z)^n} &\sim z^a \left(z^\lambda e^{1-z^\lambda}\right)^{-n/\lambda} \nonumber \\
&= z^a e^{n \varphi(z)} \label{integrandasymp}
\end{align}
as $n \to \infty$, where
\begin{equation}
\label{phidef}
	\varphi(z) = (z^\lambda - 1 - \log z^\lambda)/\lambda.
\end{equation}

Let $\Delta$ be the circle centered at $z=1$ which subtends an angle of $\theta$ from the origin.  Denote by $\sigma_1, \sigma_2$ the points where $\Delta$ intersects the line of steepest descent of the function $\re \varphi(z)$ passing through the point $z=1$.  Note that by symmetry $\sigma_1 = \overline{\sigma_2}$ and $\re \varphi(\sigma_1) = \re \varphi(\sigma_2)$.  Further, $\re \varphi(\sigma_1) < 0$.  We will impose the following growth condition on the derivative of the function $f$.

\begin{condition}
\label{dfgrowth}
There exists a constant $0 < \nu < -\re \varphi(\sigma_1)$ such that, if $z$ is restricted to any compact subset of $\{z \in \C : z \neq 0 \text{ and } |\arg z| \leq \theta\}$, we have
\[
	\frac{f'(r_n z)}{f(r_n z)} = O(e^{\nu n})
\]
uniformly in $z$ as $n \to \infty$.
\end{condition}

This technical condition is used in the proof of Lemma \ref{fnintlemma}.

\begin{definition}
\label{admisscontour}
A contour $\gamma$ is said to be \textit{admissible} if
\begin{enumerate}
\item $\gamma$ is a smooth Jordan curve winding counterclockwise around the origin.
\item In the sector $|\arg z| \leq \theta$, $\gamma$ is a positive distance from the curve $\re \varphi(z) = 0$ except for a part that lies in some neighborhood $U_\gamma$ of $z = 1$.  In this set $U_\gamma$ the contour $\gamma$ coincides with the path of steepest decent of the function $\re \varphi(z)$ passing through the point $z = 1$.
\item In the sector $|\arg z| \geq \theta$, $\gamma$ coincides with the unit circle.
\end{enumerate}
\end{definition}

We will now introduce a number of Cauchy-type integrals.  Various facts about this type of integral transform, including a detailed description of Sokhotski's formula, can be found in \cite[ch.\ 1]{gakhov:bvp}.

Let $\gamma$ be an admissible contour and suppose for now that $z \neq 0$ is inside $r_n \gamma$.  The function
\[
	\frac{f(z) - p_{n-1}(z)}{z^n} = \Phi(z)
\]
is entire, so by Cauchy's integral formula we have
\begin{equation}
\label{fnbuild}
	\Phi(z) = \frac{1}{2\pi i} \int_{r_n \gamma} \zeta^{-n} f(\zeta) \frac{d\zeta}{\zeta - z} - \frac{1}{2\pi i}\int_{r_n \gamma} \zeta^{-n} p_{n-1}(\zeta) \frac{d\zeta}{\zeta - z}.
\end{equation}
Since
\[
	\int_{r_n \gamma} \zeta^{-m} \frac{d\zeta}{\zeta - z} = 0
\]
for all integers $m \geq 1$, the second integral in \eqref{fnbuild} is zero.  Making the substitution $\zeta = r_n s$ yields the identity
\[
	\frac{f(r_n z) - p_{n-1}(r_n z)}{(r_n z)^n} = \frac{1}{2\pi i} \int_\gamma (r_n s)^{-n} f(r_n s) \frac{ds}{s-z},
\]
which holds for $z \neq 0$ inside $\gamma$.  (This construction is a special case of the one in \cite[p.\ 436]{edrei:paderem} for an integral representation of the error of a Pad\'e approximation.)

The above calculations motivate us to define the function
\begin{equation}
\label{fdef}
	F_n(z) = \frac{r_n^{-a} (\log r_n)^{-b}}{2\pi i} \int_{\gamma} (e^{1/\lambda}s)^{-n} f(r_n s) \frac{ds}{s-z}
\end{equation}
for all $z \notin \gamma$, $z \neq 0$.  For $z$ inside $\gamma$ with $z \neq 0$ we know from above that
\begin{equation}
\label{fnexplicit}
	F_n(z) = \frac{f(r_n z) - p_{n-1}(r_n z)}{r_n^a (\log r_n)^b (e^{1/\lambda} z)^n}.
\end{equation}
By Sokhotski's formula we have
\[
	F_n^+(z) = F_n^-(z) + \frac{f(r_n z)}{r_n^a (\log r_n)^b (e^{1/\lambda}z)^n}, \qquad z \in \gamma,
\]
where $F_n^+$ (resp.\ $F_n^-$) refers to the continuous extensions of $F_n$ from inside (resp.\ outside) $\gamma$ onto $\gamma$.  Though we don't need to for the present paper, we can also calculate $F_n(z)$ for $z$ outside $\gamma$ using the residue theorem.  In all,
\[
	F_n(z) = \frac{1}{r_n^a (\log r_n)^b (e^{1/\lambda} z)^n} \times \begin{cases}
		-p_{n-1}(r_n z) & \text{for } z \text{ outside } \gamma, \\
		f(r_n z) - p_{n-1}(r_n z) & \text{for } z \neq 0 \text{ inside } \gamma.
		\end{cases}
\]

Let $\gamma_\theta = \gamma \cap \{z \in \C : |\arg z| \leq \theta\}$ and define
\begin{equation}
\label{gdef}
	G_n(z) = \frac{1}{2\pi i}\int_{\gamma_\theta} e^{n \varphi(s)} \frac{ds}{s-z},
\end{equation}
where $\varphi$ is as in \eqref{phidef}.  Sokhotski's formula tells us that
\[
	G_n^+(z) = G_n^-(z) + e^{n\varphi(z)}, \qquad z \in \gamma_\theta,
\]
where $G_n^+$ and $G_n^-$ refer to the continuous extensions of $G_n$ from the left and right of $\gamma_\theta$ onto $\gamma_\theta$, respectively.  Based on the asymptotic \eqref{integrandasymp} and the fact that the saddle point of the function $\varphi(s)$ is located at $s=1$, we expect that $F_n(z) \approx G_n(z)$ for $z \approx 1$ as $n \to \infty$.  Something to this effect is shown in Lemma \ref{fngnapproxlemma}.

We observe that $\varphi(1) = \varphi'(1) = 0$ and $\varphi''(1) = \lambda$, so
\[
	\varphi(s) = \frac{\lambda}{2}(s-1)^2 + O\left((s-1)^3\right)
\]
in a neighborhood of $s=1$.  We can thus invoke the inverse function theorem to find a neighborhood $V$ of the origin, a neighborhood $U \subset U_\gamma$ of $s=1$, and a biholomorphic map $\psi : V \to U$ which satisfies
\[
	(\varphi \circ \psi)(\xi) = \xi^2
\]
for $\xi \in V$.  Note that the set $U_\gamma$ here is as defined in Definition \ref{admisscontour}.  This function $\psi$ maps a segment of the imaginary axis onto the path of steepest descent of the function $\re \varphi(z)$ going through $z=1$.

Just as in \cite[p.\ 189]{mclaughlin:exprh} we define
\[
	h(\zeta) = \frac{1}{2\pi i} \int_{-\infty}^{\infty} e^{-u^2} \frac{du}{u-\zeta}, \qquad \zeta \in \C \setminus \R
\]
and
\[
	P_n(z) = h\!\left(-i\sqrt{n} \psi^{-1}(z)\right), \qquad z \in U \setminus \gamma_\theta.
\]
By Sokhotski's formula we have
\[
	h^+(x) = h^-(x) + e^{-x^2}, \qquad x \in \R,
\]
and, setting $z = \psi(ix/\sqrt{n})$,
\[
	P_n^+(z) = P_n^-(z) + e^{n\varphi(z)}, \qquad z \in U \cap \gamma_\theta.
\]
Here $+$ and $-$ indicate approaching the contour $\gamma_\theta$ from the left and from the right, respectively.

Finally define
\begin{equation}
\label{erfc}
	\erfc(z) = \frac{2}{\sqrt{\pi}} \int_z^\infty e^{-s^2}\,ds,
\end{equation}
where the contour of integration is the horizontal line starting at $s=z$ and extending to the right to $s = z+\infty$.  This is known as the complementary error function.  For information about the zeros of this function we refer the reader to \cite{fettis:erfczeros}.

\section{Proof of the main result}

In this section we will prove Theorem \ref{maintheo}.

Choose $\epsilon > 0$ such that $\overline{B_{2\epsilon}(1)} \subset U$ and define
\[
	m(z) = \begin{cases}
		F_n(z) & \text{for } z \in \C \setminus \left( \gamma \cup \overline{B_{2\epsilon}(1)} \right), \\
		G_n(z) - P_n(z) & \text{for } z \in B_{2\epsilon}(1) \setminus \gamma.
	\end{cases}
\]
The jumps for $G_n(z)$ and $P_n(z)$ cancel each other out as $z$ moves across $\gamma$ in $B_{2\epsilon}(1)$, so $m$ is analytic on $B_{2\epsilon}(1)$.  If we define the contours
\[
	\Gamma_1 = \partial B_{2\epsilon}(1), \qquad \Gamma_2 = \gamma \setminus B_{2\epsilon}(1), \qquad \Gamma = \Gamma_1 \cup \Gamma_2,
\]
then the function $m$ uniquely solves the following Riemann-Hilbert problem.

\begin{figure}[!htb]
\floatbox[{\capbeside\thisfloatsetup{capbesideposition={right,center}, capbesidewidth=0.45\textwidth}}]{figure}[\FBwidth]
{\caption{Schematic representation of the new contour $\Gamma$.  The components $\Gamma_1$ and $\Gamma_2$ are indicated by a dashed line and a solid line, respectively.}\label{biggammaplot}}
{\includegraphics[width=0.5\textwidth]{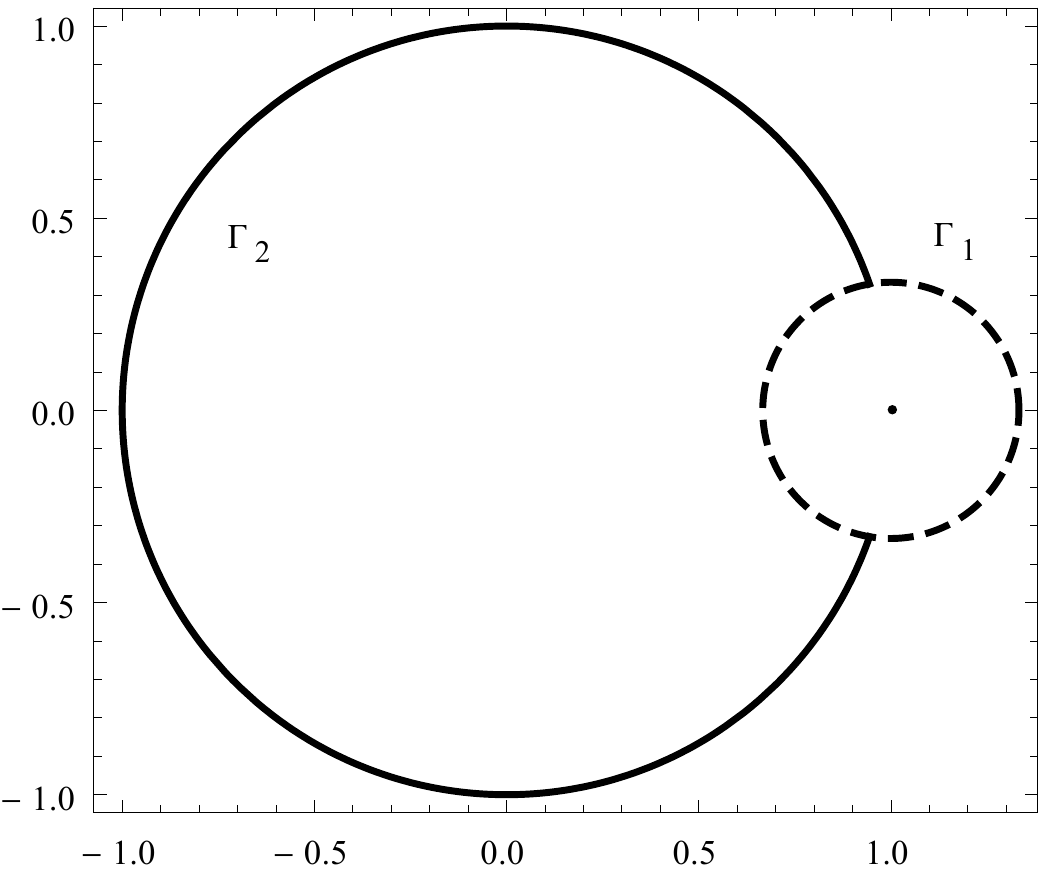}}
\end{figure}

\begin{rhp}
\label{mainrhp}
Seek an analytic function $M \colon \C \setminus \Gamma \to \C$ such that
\begin{enumerate}
 \item $M^+(z) = M^-(z) - P_n(z) + G_n(z) - F_n(z)$ for $z \in \Gamma_1 \setminus \Gamma_2$,
 \item $M^+(z) = M^-(z) + \frac{f(r_n z)}{r_n^a (\log r_n)^b (e^{1/\lambda}z)^n}$ for $z \in \Gamma_2$,
\item $M(z) \to 0$ as $|z| \to \infty$.
\end{enumerate}
\end{rhp}

We therefore have
\begin{align}
m(z) &= \frac{1}{2\pi i} \int_\Gamma \Bigl[m^+(s) - m^-(s)\Bigr] \frac{ds}{s-z} \nonumber \\
	&= -\frac{1}{2\pi i} \int_{\Gamma_1} P_n(s) \frac{ds}{s-z} + \frac{1}{2\pi i} \int_{\Gamma_1} G_n(s)\frac{ds}{s-z} - \frac{1}{2\pi i} \int_{\Gamma_1} F_n(s)\frac{ds}{s-z} \nonumber \\
	&\qquad + \frac{r_n^{-a} (\log r_n)^{-b}}{2\pi i} \int_{\Gamma_2} (e^{1/\lambda}s)^{-n} f(r_n s) \frac{ds}{s-z} \label{mint}
\end{align}
by Sokhotski's formula.  As $n \to \infty$, each of these integrals tends to zero uniformly as long as $z$ is bounded away from $\Gamma$.  Indeed, by referring to Lemmas \ref{gintlemma}, \ref{gamma2lemma}, \ref{fnintlemma}, and \ref{pintlemma}, we know that 
\[
	m(z) = o(1),
\]
and by the definition of $m$,
\[
	G_n(z) = P_n(z) + o(1)
\]
uniformly for $z \in B_\epsilon(1)$ as $n \to \infty$.  Now set $z = 1 + w/\sqrt{n}$, where $w$ is restricted to a compact subset of $\re w < 0$.  By Lemma \ref{fngnapproxlemma} we deduce from the above that
\begin{equation}
\label{fnpnasymp}
	F_n(1+w/\sqrt{n}) = P_n(1+w/\sqrt{n}) + o(1)
\end{equation}
uniformly as $n \to \infty$.

Following the argument in \cite[p.\ 194]{mclaughlin:exprh}, it's possible to show that
\[
	h(\zeta) = \frac{1}{2}e^{-\zeta^2} \erfc(-i\zeta)
\]
on $\im \zeta > 0$.  Setting
\[
	\zeta = -i\sqrt{n}\psi^{-1}(z) = -i\sqrt{n\varphi(z)}
\]
for an appropriately chosen branch of the square root we obtain an expression for $P_n$,
\[
	P_n(z) = \frac{1}{2} e^{n\varphi(z)} \erfc\!\left(-\sqrt{n\varphi(z)}\right),
\]
valid for $z \in U$ to the left of $\gamma_\theta$.  Since $2 - \erfc(x) = \erfc(-x)$ we can rewrite this as
\[
	P_n(z) = e^{n\varphi(z)} - \frac{1}{2} e^{n\varphi(z)} \erfc\!\left(\sqrt{n\varphi(z)}\right).
\]
It is straightforward to show that
\[
	\lim_{n \to \infty} n \varphi(1+w/\sqrt{n}) = \frac{\lambda}{2} w^2
\]
uniformly, so
\[
	P_n(1+w/\sqrt{n}) = e^{\lambda w^2/2} - \frac{1}{2} e^{\lambda w^2/2} \erfc\!\left(w \sqrt{\lambda/2}\,\right) + o(1)
\]
uniformly as $n \to \infty$.  By substituting this into \eqref{fnpnasymp} we see that
\begin{equation}
\label{fnerfasymp1}
F_n(1+w/\sqrt{n}) = e^{\lambda w^2/2} - \frac{1}{2} e^{\lambda w^2/2} \erfc\!\left(w \sqrt{\lambda/2}\,\right) + o(1)
\end{equation}
uniformly as $n \to \infty$.

For $n$ large enough we can write
\[
	F_n(1+w/\sqrt{n}) = \frac{1}{r_n^a (\log r_n)^b} \left(\frac{f(r_n(1+w/\sqrt{n}))}{e^{n/\lambda} (1+w/\sqrt{n})^{n}} - \frac{p_{n-1}(r_n(1+w/\sqrt{n}))}{e^{n/\lambda} (1+w/\sqrt{n})^{n}} \right)
\]
by \eqref{fnexplicit}.  The asymptotic assumption \eqref{fgrowth} grants us the uniform estimate
\[
	\frac{f(r_n(1+w/\sqrt{n}))}{r_n^a (\log r_n)^b e^{n/\lambda} (1+w/\sqrt{n})^{n}} = e^{\lambda w^2/2} + o(1),
\]
and upon substituting this into the above formula we find that
\[
	F_n(1+w/\sqrt{n}) = e^{\lambda w^2/2} - e^{\lambda w^2/2} \frac{p_{n-1}(r_n(1+w/\sqrt{n}))}{f(r_n(1+w/\sqrt{n}))}(1+o(1)) + o(1)
\]
uniformly as $n \to \infty$.  Substituting this into \eqref{fnerfasymp1} yields the expression
\[
\frac{p_{n-1}(r_n(1+w/\sqrt{n}))}{f(r_n(1+w/\sqrt{n}))} (1+o(1)) = \frac{1}{2} \erfc\!\left(w \sqrt{\lambda/2}\,\right) + o(1),
\]
which holds uniformly as $n \to \infty$.  Theorem \ref{maintheo} follows immediately from this asymptotic.

\section{Lemmas from the proof}

\begin{lemma}
\label{gintlemma}
\[
	\int_{\Gamma_1} G_n(s)\frac{ds}{s-z} = O(n^{-1/2})
\]
uniformly for $z \in B_\epsilon(1)$ as $n \to \infty$.
\end{lemma}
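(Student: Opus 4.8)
The plan is to interchange the order of integration in the iterated Cauchy integral on the left, which collapses it to an integral over the portion of $\gamma_\theta$ lying outside $B_{2\epsilon}(1)$ — precisely the portion on which $e^{n\varphi}$ is exponentially small — and then to estimate that remaining integral crudely.

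First I would record the trivial observation that $\Gamma_1 = \partial B_{2\epsilon}(1)$ keeps distance at least $\epsilon$ from $B_\epsilon(1)$, so the kernel $(s-z)^{-1}$ is bounded uniformly in $s\in\Gamma_1$ and $z\in B_\epsilon(1)$, and $\len(\Gamma_1)<\infty$. Then I would substitute the definition $G_n(s) = \tfrac{1}{2\pi i}\int_{\gamma_\theta} e^{n\varphi(u)}\,du/(u-s)$ from \eqref{gdef} and interchange the order of integration by Fubini's theorem — the hypotheses hold because $\int_{\Gamma_1}|ds|/|u-s|$ grows at worst logarithmically as $u$ approaches $\Gamma_1$, hence is integrable over $\gamma_\theta$, while $|e^{n\varphi(u)}|$ is bounded on $\gamma_\theta$ — obtaining
\[
	\int_{\Gamma_1} G_n(s)\,\frac{ds}{s-z} = \frac{1}{2\pi i}\int_{\gamma_\theta} e^{n\varphi(u)}\left(\int_{\Gamma_1}\frac{ds}{(u-s)(s-z)}\right)du.
\]
The inner integral is evaluated via the partial fraction identity $\frac{1}{(u-s)(s-z)} = \frac{1}{u-z}\bigl(\frac{1}{u-s}+\frac{1}{s-z}\bigr)$: since $\Gamma_1$ is a counterclockwise circle with $z$ in its interior, $\tfrac{1}{2\pi i}\int_{\Gamma_1}(s-z)^{-1}\,ds = 1$, while $\tfrac{1}{2\pi i}\int_{\Gamma_1}(u-s)^{-1}\,ds$ equals $-1$ when $u\in B_{2\epsilon}(1)$ and $0$ when $u\notin\overline{B_{2\epsilon}(1)}$. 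Hence the inner integral vanishes for $u$ inside the disk and equals $(u-z)^{-1}$ for $u$ outside, so the whole expression collapses to
\[
	\int_{\Gamma_1} G_n(s)\,\frac{ds}{s-z} = \int_{\gamma_\theta\setminus\overline{B_{2\epsilon}(1)}} e^{n\varphi(u)}\,\frac{du}{u-z}.
\]
(The same identity is seen by deforming $\Gamma_1$ inward, which picks up the residue at $s=z$ together with a dog-bone contribution from the jump of $G_n$ across the steepest-descent arc $\gamma_\theta\cap B_{2\epsilon}(1)$; the residue is exactly the near-saddle part of $G_n$, and it cancels.)

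It then remains to bound the surviving integral. By admissibility (Definition \ref{admisscontour}) one has $\re\varphi < 0$ on all of $\gamma_\theta\setminus\{1\}$: near $z=1$ this follows from $(\varphi\circ\psi)(\xi)=\xi^2$ being negative along the imaginary axis, and elsewhere from $\gamma$ keeping a positive distance from the level set $\{\re\varphi = 0\}$ together with continuity. Since $\gamma_\theta\setminus\overline{B_{2\epsilon}(1)}$ stays a fixed positive distance from $z=1$, we get $\re\varphi \leq -c$ there for some constant $c=c(\epsilon)>0$; combined with $|u-z|\geq\epsilon$ this gives
\[
	\left|\int_{\gamma_\theta\setminus\overline{B_{2\epsilon}(1)}} e^{n\varphi(u)}\,\frac{du}{u-z}\right| \leq \frac{\len(\gamma_\theta)}{\epsilon}\,e^{-cn} = O(e^{-cn})
\]
uniformly for $z\in B_\epsilon(1)$, which is far stronger than the claimed $O(n^{-1/2})$.

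I do not expect any real obstacle; the one thing to be careful about is the order-of-integration (equivalently, contour-deformation) bookkeeping that makes the saddle contribution drop out. A less efficient route would be to estimate $|G_n(s)|$ pointwise on $\Gamma_1$ — it is $O(n^{-1/2})$ by a Watson's-lemma analysis of the saddle — but that would require honest asymptotics, whereas the argument above needs only crude bounds.
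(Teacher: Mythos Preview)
Your argument is correct, and it is genuinely different from the paper's. The paper estimates $G_n(s)$ pointwise on $\Gamma_1$: it splits $\Gamma_1$ into the arcs on either side of $\gamma_\theta$, handles the two intersection points $s_1,s_2$ via the Sokhotski principal-value formula together with a Taylor estimate, and controls the remaining points by deforming $\gamma_\theta$ slightly and applying the Laplace method to the saddle at $t=1$. This yields the sup-norm bound $\|G_n\|_{L^\infty(\Gamma_1)}=O(n^{-1/2})$ and hence the stated estimate on the integral. Your Fubini/residue collapse sidesteps all of that and gives the sharper exponential bound; the only point worth making explicit is that $\gamma_\theta$ meets $\Gamma_1$ transversally (a smooth arc through the centre of a small circle), so that $|u-s|\gtrsim\sqrt{\tau^2+\sigma^2}$ in local arclength coordinates near $s_1,s_2$ and absolute integrability for Fubini is immediate. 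Your approach also buys more than this lemma: the same swap applied to $F_n$ reduces $\int_{\Gamma_1}F_n(s)\,ds/(s-z)$ to the integral over $\gamma\setminus B_{2\epsilon}(1)=\Gamma_2$ already treated in Lemma~\ref{gamma2lemma}, so Lemma~\ref{fnintlemma}---and with it the technical Condition~\ref{dfgrowth} on $f'/f$---could be dispensed with entirely. What the paper's route gives that yours does not is the pointwise estimate $G_n(s)=O(n^{-1/2})$ on $\Gamma_1$ itself, though that is not actually used elsewhere.
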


\begin{proof}
For $z \in B_\epsilon(1)$ we have
\begin{align*}
	\left| \int_{\Gamma_1} G_n(s)\frac{ds}{s-z} \right| &\leq 4\pi \epsilon \left\| \frac{G_n(s)}{s-z} \right\|_{L^\infty(\Gamma_1)} \\
	&\leq 4\pi \| G_n(s) \|_{L^\infty(\Gamma_1)}.
\end{align*}
Let $\Gamma_1^+$ and $\Gamma_1^-$ denote the closures of the parts of $\Gamma_1$ lying to the left and to the right of $\gamma_\theta$, respectively.  Then from the above we see that
\begin{equation}
\label{gintest}
	\left| \int_{\Gamma_1} G_n(s)\frac{ds}{s-z} \right| \leq 4\pi \left( \| G_n(s) \|_{L^\infty(\Gamma_1^+)} + \| G_n(s) \|_{L^\infty(\Gamma_1^-)} \right).
\end{equation}
Define $s_1,s_2$ to be the points where $\Gamma_1$ intersects $\gamma_\theta$.

Depending on whether $s$ approaches $s_j$ from the left or the right, we have
\[
	G_n(s_j) = \pm \frac{1}{2} e^{n\varphi(s_j)} + \frac{1}{2\pi i} \operatorname{P.V.} \int_{\gamma_\theta} e^{n\varphi(t)} \frac{dt}{t-s_j}.
\]
Note that the first term here decays exponentially.  We can deform the contour $\gamma_\theta$ in a small neighborhood $A$ of $s_j$ to be a straight line passing through $s_j$.  Choose this neighborhood small enough so that $\gamma_\theta$ still lies entirely below the saddle point at $s=1$ on the surface $\re \varphi(s)$ except where it passes through $s=1$.  We then have
\[
	\operatorname{P.V.} \int_{\gamma_\theta} e^{n\varphi(t)} \frac{dt}{t-s_j} = \int_{\gamma_\theta \cap A} \frac{e^{n\varphi(t)} - e^{n\varphi(s_j)}}{t-s_j}\,dt + \int_{\gamma_\theta \setminus A} e^{n\varphi(t)} \frac{dt}{t-s_j}.
\]
A straightforward application of the Laplace method to the second integral here yields
\[
	\int_{\gamma_\theta \setminus A} e^{n\varphi(t)} \frac{dt}{t-s_j} = O(n^{-1/2}).
\]
From Taylor's theorem we know that
\begin{align*}
\left| e^{n\varphi(t)} - e^{n\varphi(s_j)} \right| &\leq |t-s_j| \sup_{\tau \in \gamma_\theta \cap A} \left| n\varphi'(\tau) e^{n\varphi(\tau)} \right| \\
&\leq |t-s_j| n e^{n(\re \varphi(s_j)+c)} \sup_{\tau \in \gamma_\theta} |\varphi'(\tau)|,
\end{align*}
where $0 < c < -\re \varphi(s_j)$.  From this it follows that
\[
	\left| \int_{\gamma_\theta \cap A} \frac{e^{n\varphi(t)} - e^{n\varphi(s_j)}}{t-s_j}\,dt \right| < \const. \cdot ne^{n(\re \varphi(s_j) + c)},
\]
and this tends to $0$.  Combining these facts we conclude that
\begin{equation}
\label{gest1}
	G_n(s_j) = O(n^{-1/2})
\end{equation}
as $n \to \infty$.

Now suppose $s \in \Gamma_1^+ \setminus \{s_1,s_2\}$.  Then $e^{n\varphi(t)}/(t-s)$ is analytic in a neighborhood of $\gamma_\theta$.  We can deform $\gamma_\theta$ near $s_1$ and $s_2$ so that it stays a small positive distance away from $\Gamma_1^+$, and in such a way that $\gamma_\theta$ is unchanged in the disk $B_\epsilon(1)$.  Split the integral for $G_n(s)$ into the pieces
\[
	G_n(s) = \frac{1}{2\pi i}\int_{\gamma_\theta \setminus B_\epsilon(1)} e^{n\varphi(t)} \frac{ds}{t-s} + \frac{1}{2\pi i}\int_{\gamma_\theta \cap B_\epsilon(1)} e^{n\varphi(t)} \frac{ds}{t-s}.
\]
After this deformation, the first integral is bounded by
\[
	\left| \int_{\gamma_\theta \setminus B_\epsilon(1)} e^{n\varphi(t)} \frac{ds}{t-s} \right| \leq C e^{-c n},
\]
where $C > 0$ and $c > 0$ are constants independent of $s$.  In the second integral let $t = \psi(iu)$ and define $-i\psi^{-1}(\gamma_\theta \cap B_\epsilon(1)) = (-\alpha_1,\alpha_2)$, so that
\begin{align}
\left| \int_{\gamma_\theta \cap B_\epsilon(1)} e^{n\varphi(t)} \frac{ds}{t-s} \right| &= \left| \int_{-\alpha_1}^{\alpha_2} e^{-nu^2} \frac{i\psi'(iu)}{\psi(iu)-s}\,du \right| \nonumber \\
&\leq \sup_{u \in (\alpha_1,\alpha_2)} \left| \frac{\psi'(iu)}{\psi(iu)-s} \right| \int_{-\alpha_1}^{\alpha_2} e^{-nu^2}\,du \nonumber \\
&\leq \epsilon^{-1} \sqrt{\pi/n} \sup_{u \in (\alpha_1,\alpha_2)} |\psi'(iu)|. \label{gest2}
\end{align}
An identical process will yield the same bound for $s \in \Gamma_1^- \setminus \{s_1,s_2\}$.

Combining \eqref{gest1} and \eqref{gest2} in \eqref{gintest}, we conclude that
\begin{equation}
\label{gestfinish}
	\int_{\Gamma_1} G_n(s) \frac{ds}{s-z} = O(n^{-1/2})
\end{equation}
uniformly for $z \in B_\epsilon(1)$ as $n \to \infty$.
\end{proof}

\begin{lemma}
\label{gamma2lemma}
There exists a constant $c>0$ such that
\[
	r_n^{-a} (\log r_n)^{-b} \int_{\Gamma_2} (e^{1/\lambda}s)^{-n} f(r_n s) \frac{ds}{s-z} = O(e^{-cn})
\]
uniformly for $z \in B_\epsilon(1)$ as $n \to \infty$.
\end{lemma}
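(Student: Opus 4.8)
The plan is to reduce the lemma to a pointwise (sup-norm) estimate on the integrand and then bound the integral by the length of $\gamma$ times that supremum. First note that for $s \in \Gamma_2 = \gamma \setminus B_{2\epsilon}(1)$ we have $|s-1| \ge 2\epsilon$, whereas $z \in B_\epsilon(1)$ gives $|z-1| < \epsilon$, so $|s-z| > \epsilon$ and the Cauchy kernel $(s-z)^{-1}$ is bounded uniformly by $\epsilon^{-1}$. Since the length of $\gamma$ is a fixed finite constant, it therefore suffices to prove that
\[
	r_n^{-a}(\log r_n)^{-b}\,(e^{1/\lambda}s)^{-n} f(r_n s) = O(e^{-cn})
\]
uniformly for $s \in \Gamma_2$.

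I would split $\Gamma_2$ into the arc $\gamma_\theta \setminus B_{2\epsilon}(1)$, which lies in the sector $|\arg s| \le \theta$, and the complementary arc $\gamma \setminus \gamma_\theta$, which lies in $|\arg s| > \theta$, and treat each with the corresponding case of the growth hypothesis \eqref{fgrowth}. On the first arc---a compact set bounded away from the origin and from the negative real axis---the first case of \eqref{fgrowth}, together with $\log(r_n s) = \log r_n + \log s$ and $(r_n s)^\lambda = (n/\lambda)s^\lambda$, yields exactly as in \eqref{integrandasymp} that the left-hand side above equals $s^a e^{n\varphi(s)}(1+o(1))$ uniformly, with $\varphi$ as in \eqref{phidef}. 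The crucial structural fact here is that admissibility of $\gamma$ forces $\re\varphi(s) < 0$ for all $s \in \gamma_\theta \setminus \{1\}$: in the neighborhood $U_\gamma$ of $z=1$ the contour coincides with the path of steepest descent of $\re\varphi$ through $z=1$, along which $\re\varphi$ decreases away from its saddle value $0$, and outside $U_\gamma$ the contour stays a positive distance from $\{\re\varphi = 0\}$, so by continuity of $\re\varphi$ along the connected arc $\gamma_\theta$ it cannot pass to the side where $\re\varphi > 0$. Consequently $\re\varphi(s) \le -\delta$ on the compact set $\gamma_\theta \setminus B_{2\epsilon}(1)$ for some $\delta > 0$, and this arc contributes $O(e^{-\delta n})$. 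On the second arc $\gamma$ coincides with the unit circle, so $|(e^{1/\lambda}s)^{-n}| = e^{-n/\lambda}$, while the second case of \eqref{fgrowth} gives $|f(r_n s)| = O(\exp(\mu r_n^\lambda)) = O(e^{\mu n/\lambda})$; since $r_n = (n/\lambda)^{1/\lambda}$, the prefactor $r_n^{-a}(\log r_n)^{-b}$ grows only polynomially and hence is $O(e^{\eta n})$ for every $\eta > 0$, so this arc contributes $O(e^{-(1-\mu)n/(2\lambda)})$, and $1-\mu > 0$ by hypothesis. Taking $c = \min\{\delta,\,(1-\mu)/(2\lambda)\}$ completes the argument.

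The only genuinely delicate point is extracting the sign $\re\varphi(s) < 0$ along the non-saddle portion of $\gamma_\theta$ from Definition \ref{admisscontour}; once that is in hand the remainder is a routine combination of the two estimates in \eqref{fgrowth} with the elementary lower bound on $|s-z|$, and everything is uniform in $z \in B_\epsilon(1)$ since none of the bounds depends on $z$ beyond the factor $(s-z)^{-1}$.
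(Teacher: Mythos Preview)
Your proof is correct and follows essentially the same approach as the paper's own proof: both split $\Gamma_2$ at $|\arg s| = \theta$, apply the first line of \eqref{fgrowth} together with $\re\varphi(s) \le -\delta < 0$ on the inner arc, and the second line of \eqref{fgrowth} together with $|s|=1$ on the outer arc, after bounding the Cauchy kernel by $\epsilon^{-1}$. Your added justification that admissibility forces $\re\varphi < 0$ along $\gamma_\theta \setminus \{1\}$ is a point the paper simply asserts.
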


\begin{proof}
Let $\Gamma_2'$ denote the part of $\Gamma_2$ for which $|\arg s| \leq \theta$ and let $\Gamma_2''$ denote the part for which $\theta < |\arg s|$.  We'll split the integral into the two parts
\[
	\int_{\Gamma_2} = \int_{\Gamma_2'} + \int_{\Gamma_2''}
\]
and estimate them separately.

For $|\arg z| \leq \theta$ we can write
\[
	f(z) = z^a (\log z)^b \exp(z^\lambda)(1 + \delta(z)),
\]
where $\delta(z) \to 0$ uniformly as $|z| \to \infty$, so for $s \in \Gamma_2'$ we have
\[
	\frac{f(r_n s)}{r_n^a (\log r_n)^b (e^{1/\lambda}s)^n} = s^a e^{n \varphi(s)} \left(1 + \frac{\log s}{\log r_n}\right)^b (1 + \delta(r_n s)).
\]
If $s \in \Gamma_2'$ then we can find a constant $d > 0$ such that $\re \varphi(s) < -d$.  The quantities $s^a$, $\log s/\log r_n$, and $\delta(r_n s)$ are uniformly bounded for $s \in \Gamma_2'$, and the quantities $r_n^a$ and $(\log r_n)^b$ grow subexponentially, so if $z \in B_\epsilon(1)$ we can find positive constants $C_1$ and $d'$ such that
\[
	\left| \int_{\Gamma_2'} (e^{1/\lambda}s)^{-n} f(r_n s) \frac{ds}{s-z} \right| \leq \len(\Gamma_2') \cdot \epsilon^{-1} \cdot C_1 e^{-d'n}.
\]

For $|\arg z| > \theta$ we can write
\[
	|f(z)| \leq C_2 \exp(\mu|z|^\lambda)
\]
for some constant $C_2$.  If $s \in \Gamma_2''$ then $|s| = 1$, so
\[
	\left| \frac{f(r_n s)}{(e^{1/\lambda}s)^n} \right| \leq C_2 \exp[(\mu - 1)n/\lambda],
\]
and, since $|s-z| \geq \epsilon$,
\[
	\left| \int_{\Gamma_2''} (e^{1/\lambda}s)^{-n} f(r_n s) \frac{ds}{s-z} \right| \leq \len(\Gamma_2'') \cdot \epsilon^{-1} \cdot C_2 \exp[(\mu - 1)n/\lambda].
\]
Combining this with the above estimate yields the desired result.
\end{proof}

\begin{lemma}
\label{fnintlemma}
\[
	\int_{\Gamma_1} F_n(s)\frac{ds}{s-z} = O(n^{-1/2})
\]
uniformly for $z \in B_\epsilon(1)$ as $n \to \infty$.
\end{lemma}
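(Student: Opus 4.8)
The plan is to imitate the proof of Lemma~\ref{gintlemma} almost step for step; the only essential novelty is that Condition~\ref{dfgrowth} is needed at exactly one point. First I would reduce to a pointwise bound: for $s \in \Gamma_1 = \partial B_{2\epsilon}(1)$ and $z \in B_\epsilon(1)$ one has $|s-z| \geq \epsilon$ while $\len(\Gamma_1) = 4\pi\epsilon$, so
\[
	\left| \int_{\Gamma_1} F_n(s)\,\frac{ds}{s-z} \right| \leq 4\pi\,\| F_n \|_{L^\infty(\Gamma_1)},
\]
and it is enough to show $F_n(s) = O(n^{-1/2})$ uniformly for $s \in \Gamma_1$. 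Starting from~\eqref{fdef}, I would discard the part of the contour with $|\arg s| > \theta$: there $\gamma$ is the unit circle, so $|r_n s|^\lambda = n/\lambda$ and the second estimate in~\eqref{fgrowth} gives $\bigl|(e^{1/\lambda}s)^{-n} f(r_n s)\bigr| \leq C e^{(\mu-1)n/\lambda}$; since $\mu < 1$, $\dist(\Gamma_1,\gamma\setminus\gamma_\theta) > 0$, and $r_n^a(\log r_n)^b$ grows subexponentially, this part contributes $O(e^{-cn})$ to $F_n(s)$, uniformly. Writing the integrand on the remaining arc as in~\eqref{integrandasymp}, I then have
\[
	F_n(s) = \frac{1}{2\pi i}\int_{\gamma_\theta} g_n(t)\,\frac{dt}{t-s} + O(e^{-cn}), \qquad g_n(t) := \frac{(e^{1/\lambda}t)^{-n} f(r_n t)}{r_n^a(\log r_n)^b} = t^a\,\eta_n(t)\,e^{n\varphi(t)},
\]
where $\eta_n(t) = \bigl(1 + \log t/\log r_n\bigr)^b\bigl(1 + \delta(r_n t)\bigr) \to 1$ uniformly on $\gamma_\theta$, with $\delta$ the error term from~\eqref{fgrowth}.

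From here I would run the case analysis of Lemma~\ref{gintlemma} with $g_n$ in place of $e^{n\varphi(\cdot)}$, keeping the notation $s_1,s_2$ for the points of $\Gamma_1\cap\gamma_\theta$ and $\Gamma_1^{\pm}$ for the two arcs of $\Gamma_1$ on either side of $\gamma_\theta$. For $s \in \Gamma_1^{+}\setminus\{s_1,s_2\}$ (and symmetrically for $\Gamma_1^{-}$) one deforms $\gamma_\theta$ near $s_1,s_2$ to stay away from $s$ while leaving it unchanged inside $B_\epsilon(1)$, splits $\gamma_\theta = (\gamma_\theta\setminus B_\epsilon(1)) \cup (\gamma_\theta\cap B_\epsilon(1))$, bounds the first piece by $O(e^{-cn})$ since $\re\varphi \leq -c < 0$ and $t^a\eta_n(t)$ is bounded there, and on the second piece substitutes $t = \psi(iu)$, so that $e^{n\varphi(t)} = e^{-nu^2}$, $|\psi(iu)-s| \geq \epsilon$, and $t^a\eta_n(t)\psi'(iu)$ is bounded, yielding a contribution $\leq \const\int e^{-nu^2}\,du = O(n^{-1/2})$. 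For $s = s_j$, Sokhotski's formula gives
\[
	F_n^{\pm}(s_j) = \pm\frac{1}{2}\, g_n(s_j) + \frac{1}{2\pi i}\operatorname{P.V.} \int_{\gamma_\theta} g_n(t)\,\frac{dt}{t-s_j} + O(e^{-cn}),
\]
in which $g_n(s_j)$ decays exponentially since $\re\varphi(s_j) < 0$; deforming $\gamma_\theta$ near $s_j$ into a straight segment kept below the saddle and choosing a small neighbourhood $A$ of $s_j$, the principal value splits as $\int_{\gamma_\theta\cap A}\frac{g_n(t)-g_n(s_j)}{t-s_j}\,dt + \int_{\gamma_\theta\setminus A} g_n(t)\,\frac{dt}{t-s_j}$, and Laplace's method handles the second integral (the saddle $t=1$ lies in $\gamma_\theta\setminus A$, with smooth bounded amplitude $t^a\eta_n(t)/(t-s_j)$) with a bound $O(n^{-1/2})$.

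The one genuinely new estimate, and the place Condition~\ref{dfgrowth} enters, is the integral over $\gamma_\theta\cap A$, which needs a bound on $g_n'$. Differentiating, $g_n'(t) = g_n(t)\bigl(-n/t + r_n f'(r_n t)/f(r_n t)\bigr)$, so Condition~\ref{dfgrowth} gives $|g_n'(t)| \leq |g_n(t)|\bigl(\const\cdot n + \const\cdot r_n e^{\nu n}\bigr)$ for $t$ in the compact set $\overline{\gamma_\theta\cap A}$; since $|g_n(t)| \leq \const\, e^{n\re\varphi(t)} \leq \const\, e^{n(\re\varphi(s_j)+c)}$ on $A$ (with $c$ as small as we like after shrinking $A$) and the polynomial factors are absorbed into the exponential, $|g_n'(t)| \leq \const\, e^{n(\re\varphi(s_j)+\nu+c)}$. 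Taylor's theorem then gives
\[
	\left| \int_{\gamma_\theta\cap A}\frac{g_n(t)-g_n(s_j)}{t-s_j}\,dt \right| \leq \const\cdot e^{n(\re\varphi(s_j)+\nu+c)},
\]
which tends to $0$ as long as $\re\varphi(s_j)+\nu < 0$. Securing this last inequality is the crux of the argument, and it is exactly what Condition~\ref{dfgrowth} is designed for: the admissible contour, the neighbourhood $U$, and $\epsilon$ are arranged so that $s_j$ lies no closer to $z=1$ along the steepest-descent path than $\sigma_1$, and since $\re\varphi$ strictly decreases along that path away from $z=1$ this forces $\re\varphi(s_j) \leq \re\varphi(\sigma_1) < -\nu$. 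Once this is in hand, assembling the cases $s=s_j$ and $s \in \Gamma_1^{\pm}\setminus\{s_1,s_2\}$ gives $F_n(s) = O(n^{-1/2})$ uniformly on $\Gamma_1$, and the lemma follows.
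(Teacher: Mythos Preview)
Your proof is correct and follows the paper's own argument almost line for line; the only cosmetic difference is that you differentiate $g_n$ directly via its logarithmic derivative, whereas the paper first peels off the leading term $t^a e^{n\varphi(t)}$ (handled exactly as in Lemma~\ref{gintlemma}) and applies the derivative bound only to the remainder $t^a e^{n\varphi(t)}\tilde\delta(r_n,t)$, arriving at the same exponential estimate. One small overstatement to correct: you assert that $s_j$ can be arranged to lie no closer to $z=1$ than $\sigma_1$, but the paper only claims (and only needs) that $\re\varphi(s_j)$ can be made as close to $\re\varphi(\sigma_1)$ as desired by enlarging $U_\gamma$, $U$, and $\epsilon$ --- since Condition~\ref{dfgrowth} gives the strict inequality $\nu < -\re\varphi(\sigma_1)$, this still secures $\re\varphi(s_j)+\nu+c<0$.
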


\begin{proof}
Split the integral for $F_n$ into the two pieces
\[
	F_n(s) = \frac{r_n^{-a} (\log r_n)^{-b}}{2\pi i} \left( \int_{\gamma \setminus \gamma_\theta} (e^{1/\lambda}t)^{-n} f(r_nt) \frac{dt}{t-s} + \int_{\gamma_\theta} (e^{1/\lambda}t)^{-n} f(r_nt) \frac{dt}{t-s} \right)
\]
and denote by $F_n^1(s)$ and $F_n^2(s)$ the left and right terms, respectively.

If $s \in \Gamma_1$ and $t \in \gamma\setminus\gamma_\theta$ then $|t-s| \geq C_1$ for some constant $C_1 > 0$ since
\[
	s \in \overline{B_{2\epsilon}(1)} \subset U \subset \{z \in \C : |\arg z| \leq \theta\}
\]
and $U$ is open.  We can find a constant $C_2$ such that
\[
	|f(z)| \leq C_2 \exp(\mu|z|^\lambda)
\]
for $|\arg z| \geq \theta$, and just as in the proof of Lemma \ref{gamma2lemma} we get
\[
	\left| \int_{\gamma \setminus \gamma_\theta} (e^{1/\lambda}t)^{-n} f(r_nt) \frac{dt}{t-s} \right| \leq \len(\gamma \setminus \gamma_\theta) \cdot C_1^{-1} \cdot C_2 \exp[(\mu-1)n/\lambda].
\]
It follows that there are positive constants $C_3$ and $c$ such that
\[
	\left| \int_{\Gamma_1} F_n^1(s) \frac{ds}{s-z} \right| \leq C_3 e^{-cn}.
\]

Now we consider the integral over $\gamma_\theta$.  For $|\arg z| \leq \theta$ we can write
\begin{equation}
\label{deltadef}
	f(z) = z^a (\log z)^b \exp(z^\lambda)(1+\delta(z)),
\end{equation}
where $\delta(z) \to 0$ uniformly as $|z| \to \infty$.  This implies
\[
	\frac{f(r_n t)}{r_n^a (\log r_n)^b (e^{1/\lambda}t)^n} = t^a e^{n \varphi(t)} \left(1 + \frac{\log t}{\log r_n}\right)^b (1 + \delta(r_n t))
\]
for $t \in \gamma_\theta$, so we will rewrite
\begin{align*}
	&\int_{\Gamma_1} F^2_n(s) \frac{ds}{s-z} \\
	&\qquad = \frac{1}{2\pi i}\int_{\Gamma_1} \frac{1}{s-z} \int_{\gamma_\theta} t^a e^{n\varphi(t)} \frac{dt}{t-s} \,ds + \frac{1}{2\pi i}\int_{\Gamma_1} \frac{1}{s-z} \int_{\gamma_\theta} t^a e^{n\varphi(t)} \tilde{\delta}(r_n, t) \frac{dt}{t-s} \,ds,
\end{align*}
where
\begin{equation}
\label{deltatdef}
	\tilde{\delta}(r_n, t) = \left(1 + \frac{\log t}{\log r_n}\right)^b (1 + \delta(r_n t)) - 1.
\end{equation}
The first integral in this expression can be estimated using the method in Lemma \ref{gintlemma} while the second requires a little more care.  Actually the proof will go through just as before except for the estimates at the points $s_j$, which we will detail here.

Let's name the inner integral
\[
	g_n(s) = \frac{1}{2\pi i}\int_{\gamma_\theta} t^a e^{n\varphi(t)} \tilde{\delta}(r_n, t) \frac{dt}{t-s}.
\]
Depending on whether $s$ approaches $s_j$ from the left or the right, we have
\[
	g_n(s_j) = \pm \frac{1}{2} s_j^a e^{n\varphi(s_j)}\tilde{\delta}(r_n, s_j) + \frac{1}{2\pi i} \operatorname{P.V.} \int_{\gamma_\theta} t^a e^{n\varphi(t)} \tilde{\delta}(r_n, t) \frac{dt}{t-s_j}.
\]
The first term here decays exponentially.  We can deform the contour $\gamma_\theta$ in a small neighborhood $A$ of $s_j$ to be a straight line passing through $s_j$.  Choose this neighborhood small enough so that $\gamma_\theta$ still lies entirely below the saddle point at $s=1$ on the surface $\re \varphi(s)$ except where it passes through $s=1$.  We then have
\begin{align*}
	&\operatorname{P.V.} \int_{\gamma_\theta} t^a e^{n\varphi(t)} \tilde{\delta}(r_n, t) \frac{dt}{t-s_j} \\
	&\qquad = \int_{\gamma_\theta \cap A} \frac{t^a e^{n\varphi(t)} \tilde{\delta}(r_n, t) - s_j^a e^{n\varphi(s_j)}\tilde{\delta}(r_n, s_j)}{t-s_j}\,dt + \int_{\gamma_\theta \setminus A} t^a e^{n\varphi(t)} \tilde{\delta}(r_n, t) \frac{dt}{t-s_j}.
\end{align*}
For the second integral, the Laplace method yields
\[
	\int_{\gamma_\theta \setminus A} t^a e^{n\varphi(t)} \tilde{\delta}(r_n, t) \frac{dt}{t-s_j} = o(n^{-1/2}).
\]
From Taylor's theorem we know that
\begin{align*}
	&\left| t^a e^{n\varphi(t)} \tilde{\delta}(r_n, t) - s_j^a e^{n\varphi(s_j)}\tilde{\delta}(r_n, s_j) \right| \\
	&\qquad \leq |t-s_j| \sup_{\tau \in \gamma_\theta \cap A} \left|a\tau^{a-1}e^{n\varphi(\tau)}\tilde{\delta}(r_n, \tau) + n\varphi'(\tau)\tau^ae^{n\varphi(\tau)}\tilde{\delta}(r_n, \tau) + \tau^ae^{n\varphi(\tau)}\tilde{\delta}_\tau(r_n, \tau) \right| \\
	&\qquad \leq |t-s_j| \left(\sup_{\tau \in \gamma_\theta \cap A} \left|a\tau^{a-1}e^{n\varphi(\tau)}\tilde{\delta}(r_n, \tau) + n\varphi'(\tau)\tau^ae^{n\varphi(\tau)}\tilde{\delta}(r_n, \tau) \right| \right. \\
	&\hspace{3cm} + \left. \sup_{\tau \in \gamma_\theta \cap A} \left| \tau^ae^{n\varphi(\tau)}\tilde{\delta}_\tau(r_n, \tau) \right| \right).
\end{align*}
The first supremum here decays exponentially.  For the second we have
\[
	\sup_{\tau \in \gamma_\theta \cap A} \left| \tau^a e^{n\varphi(\tau)}\tilde{\delta}_\tau(r_n, \tau) \right| \leq e^{n(\re \varphi(s_j)+c')} \sup_{\tau \in \gamma_\theta} \left| \tau^a \tilde{\delta}_\tau(r_n, \tau) \right|,
\]
where $0 < c' < -\re \varphi(s_j)$.  By choosing $A$ smaller we can show that this estimate holds for any fixed $c' > 0$ small enough.  We calculate
\[
	\tilde{\delta}_\tau(r_n, \tau) = \left(\frac{b}{\tau\log(r_n \tau)} + \frac{r_n}{1+\delta(r_n \tau)} \right) \left( \tilde\delta(r_n,\tau) + 1 \right) \delta'(r_n \tau)
\]
and, from \eqref{deltadef},
\[
	\delta'(r_n \tau) = \left[ \frac{f'(r_n \tau)}{f(r_n \tau)} - \frac{1}{r_n \tau} \left( a + \frac{b}{\log(r_n \tau)} + n\tau^\lambda \right) \right](1+\delta(r_n \tau)).
\]
After substituting this into the previous expression, we may now appeal to Condition \ref{dfgrowth} to write
\[
	\sup_{\tau \in \gamma_\theta \cap A} \left| \tau^ae^{n\varphi(\tau)}\tilde\delta_\tau(r_n,\tau) \right| \leq C_4 r_n e^{n (\re \varphi(s_j) + c' + \nu)},
\]
where $C_4 > 0$ is a constant independent of $n$.  In addition to taking $c'$ as small as we like, by choosing $U_\gamma$, $U$, and $\epsilon$ slightly larger we may make $\re \varphi(s_j)$ as close to $\re \varphi(\sigma_j)$ as we like. We can thus make arrangements so that the quantity $\re \varphi(s_j) + c' + \nu$ is negative.  It follows that
\[
	\left| \int_{\gamma_\theta \cap A} \frac{t^a e^{n\varphi(t)} \tilde{\delta}(r_n, t) - s_j^a e^{n\varphi(s_j)}\tilde{\delta}(r_n, s_j)}{t-s_j}\,dt \right| \leq C_5 e^{-c''n}
\]
for some positive constants $C_5$ and $c''$, and combining this with the above Laplace method estimate we find that
\[
	g_n(s_j) = o(n^{-1/2})
\]
as $n \to \infty$.

The remainder of the proof proceeds exactly as in Lemma \ref{gintlemma}.
\end{proof}

\begin{lemma}
\label{pintlemma}
\[
	\int_{\Gamma_1} P_n(s) \frac{ds}{s-z} = O(n^{-1/2})
\]
uniformly for $z \in B_\epsilon(1)$ as $n \to \infty$.
\end{lemma}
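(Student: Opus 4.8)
The plan is to reduce everything to a uniform bound $\|P_n\|_{L^\infty(\Gamma_1)} = O(n^{-1/2})$.  Since $\Gamma_1 = \partial B_{2\epsilon}(1)$ and $z \in B_\epsilon(1)$, we have $|s-z| \geq \epsilon$ for $s \in \Gamma_1$, so once that bound is in hand the trivial estimate
\[
	\left| \int_{\Gamma_1} P_n(s) \frac{ds}{s-z} \right| \leq \len(\Gamma_1) \cdot \epsilon^{-1} \cdot \|P_n\|_{L^\infty(\Gamma_1)} = 4\pi\,\|P_n\|_{L^\infty(\Gamma_1)}
\]
closes the argument, uniformly in $z \in B_\epsilon(1)$.

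To produce the sup bound, recall $P_n(s) = h\bigl(-i\sqrt{n}\,\psi^{-1}(s)\bigr)$ for $s \in \Gamma_1 \setminus \{s_1,s_2\}$, where $\psi^{-1}$ is biholomorphic on $U \supset \overline{B_{2\epsilon}(1)}$ with $\psi^{-1}(1) = 0$.  Because $1 \notin \Gamma_1$ and $\Gamma_1$ is compact, there is a constant $\rho > 0$ with $|\psi^{-1}(s)| \geq \rho$ for all $s \in \Gamma_1$, hence $\bigl|{-i\sqrt{n}\,\psi^{-1}(s)}\bigr| \geq \rho\sqrt{n} \to \infty$ uniformly on $\Gamma_1$.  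It therefore suffices to show $h(\zeta) = O(|\zeta|^{-1})$ uniformly as $|\zeta| \to \infty$.  For $\im\zeta > 0$ we have the representation $h(\zeta) = \tfrac12 e^{-\zeta^2}\erfc(-i\zeta)$ used above; taking complex conjugates in the integral defining $h$ gives $h(\overline\zeta) = -\overline{h(\zeta)}$, which with $\erfc(\overline{W}) = \overline{\erfc(W)}$ yields $h(\zeta) = -\tfrac12 e^{-\zeta^2}\erfc(i\zeta)$ for $\im\zeta < 0$.  In both cases the argument of $\erfc$, namely $-i\zeta$ or $i\zeta$, has nonnegative real part (indeed $\re(-i\zeta) = \im\zeta$ and $\re(i\zeta) = -\im\zeta$), so the classical asymptotic $\erfc(W) = \tfrac{e^{-W^2}}{\sqrt{\pi}\,W}\bigl(1 + O(|W|^{-2})\bigr)$, valid uniformly for $\re W \geq 0$, applies and gives
\[
	h(\zeta) = \frac{i}{2\sqrt{\pi}\,\zeta}\bigl(1 + O(|\zeta|^{-2})\bigr) = O\bigl(|\zeta|^{-1}\bigr)
\]
as $|\zeta| \to \infty$.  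The two points $s_1,s_2$, where $-i\sqrt{n}\,\psi^{-1}(s)$ is real, lie on $\gamma_\theta$ and do not affect the integral; the one-sided boundary values of $P_n$ there satisfy the same bound anyway since the jump of $h$ across $\R$ at $x$ is $e^{-x^2}$, which is exponentially small at $x \sim \rho\sqrt{n}$.

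Putting these together, $|P_n(s)| \leq C/(\rho\sqrt{n})$ for all $s \in \Gamma_1$ once $n$ is large enough that $\rho\sqrt{n}$ exceeds the threshold in the $\erfc$ expansion, so $\|P_n\|_{L^\infty(\Gamma_1)} = O(n^{-1/2})$ and the lemma follows.  The only step requiring any thought is the sector bookkeeping for $\erfc$ — making sure its argument stays in the half-plane $\re W \geq 0$ where the asymptotic expansion is uniform — and this is automatic from the two branch identities above; everything else is the $ML$-inequality together with the fact that $\psi^{-1}$ is bounded away from $0$ on $\Gamma_1$.
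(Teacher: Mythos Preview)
Your argument is correct and follows the same line as the paper's: bound $\|P_n\|_{L^\infty(\Gamma_1)}$ by $O(n^{-1/2})$ via $|h(\zeta)|\le C|\zeta|^{-1}$ together with $|\psi^{-1}(s)|\ge\rho>0$ on $\Gamma_1$, then apply the $ML$-inequality using $|s-z|\ge\epsilon$. The only difference is cosmetic: the paper simply asserts the global estimate $|h(\zeta)|\le C_1|\zeta|^{-1}$ for $\zeta\notin\R$ and moves on, whereas you derive it from the two branch identities $h(\zeta)=\pm\tfrac12 e^{-\zeta^2}\erfc(\mp i\zeta)$ and the uniform asymptotic of $\erfc$ in the closed half-plane $\re W\ge 0$ (which is safely inside the sector $|\arg W|<3\pi/4$ where that expansion holds), and you note separately that the two exceptional points $s_1,s_2$ contribute nothing to the integral while the one-sided limits of $P_n$ there obey the same bound.
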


\begin{proof}
We can find a constant $C_1$ such that
\[
	|h(\zeta)| \leq C_1 |\zeta|^{-1}
\]
for $\zeta \notin \R$.  Setting $\zeta = -i\sqrt{n}\psi^{-1}(s)$ yields
\[
	|P_n(s)| \leq C_1 n^{-1/2} |\psi^{-1}(s)|^{-1} = C_1 n^{-1/2} |\varphi(s)|^{-1/2}
\]
for $s \in U \setminus \gamma_\theta$.  Thus if $s \in \Gamma_1$ then $|\varphi(s)| \geq C_2$ for some constant $C_2 > 0$, so
\[
	\left| \int_{\Gamma_1} P_n(s) \frac{ds}{s-z} \right| \leq C_1 C_2^{-1/2} \epsilon^{-1} n^{-1/2}
\]
for $z \in B_\epsilon(1)$.
\end{proof}

\begin{lemma}
\label{fngnapproxlemma}
\[
	\lim_{n \to \infty} F_n(1+w/\sqrt{n}) - G_n(1+w/\sqrt{n}) = 0
\]
uniformly for $w$ restricted to compact subsets of $\re w < 0$.
\end{lemma}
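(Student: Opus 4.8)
The plan is to compare the two Cauchy integrals defining $F_n$ and $G_n$ directly, using the asymptotic \eqref{integrandasymp} together with a localization of both integrals near the saddle point $s=1$. Fix a compact set $\mathcal K \subset \{\re w < 0\}$, so that $\re w \le -\eta$ and $|w| \le R$ on $\mathcal K$ for some $\eta, R > 0$, and put $z = 1 + w/\sqrt n$ (which lies inside $\gamma$ for $n$ large). Also fix $\epsilon > 0$ small enough that $\overline{B_\epsilon(1)} \subset U$. From \eqref{fdef} and \eqref{gdef},
\[
F_n(z) - G_n(z) = \frac{1}{2\pi i}\int_{\gamma\setminus\gamma_\theta} \frac{f(r_n s)}{r_n^a(\log r_n)^b(e^{1/\lambda}s)^n}\,\frac{ds}{s-z} + \frac{1}{2\pi i}\int_{\gamma_\theta}\!\left(\frac{f(r_n s)}{r_n^a(\log r_n)^b(e^{1/\lambda}s)^n} - e^{n\varphi(s)}\right)\frac{ds}{s-z}.
\]
On $\gamma\setminus\gamma_\theta$ the contour coincides with the unit circle and stays a fixed positive distance from $z$ (as $z \to 1$), so the estimate of \eqref{fgrowth} in the sector $|\arg z| > \theta$ gives $\bigl|f(r_ns)(e^{1/\lambda}s)^{-n}\bigr| = O\bigl(e^{(\mu-1)n/\lambda}\bigr)$, and — exactly as in Lemma \ref{gamma2lemma} — the first integral is $O(e^{-cn})$ uniformly on $\mathcal K$.

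For the integral over $\gamma_\theta$ I would use \eqref{deltadef} and \eqref{deltatdef} as in Lemma \ref{fnintlemma}: for $s \in \gamma_\theta$ the integrand equals $\Lambda_n(s)\,e^{n\varphi(s)}$, where $\Lambda_n(s) = \bigl(s^a - 1\bigr) + s^a\,\tilde\delta(r_n, s)$. Since $r_n \to \infty$, $\tilde\delta(r_n, \cdot) \to 0$ uniformly on $\gamma_\theta$; hence $\Lambda_n$ is bounded on $\gamma_\theta$ uniformly in $n$, and near $s = 1$ it satisfies $|\Lambda_n(s)| \le C\bigl(|s-1| + \eta_n\bigr)$ with $\eta_n := \sup_{s \in \gamma_\theta}|\tilde\delta(r_n, s)| \to 0$. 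I split $\gamma_\theta$ at $\partial B_\epsilon(1)$. On $\gamma_\theta \setminus B_\epsilon(1)$ one has $\re\varphi(s) \le -c_2 < 0$ (the admissible contour lies strictly below the saddle there) while $|s - z| \ge \epsilon/2$ for $n$ large, so that piece is $O(e^{-c_2 n})$ uniformly. On $\gamma_\theta \cap B_\epsilon(1)$ I substitute $s = \psi(iu)$ (as in Lemma \ref{gintlemma}), turning $e^{n\varphi(s)}$ into $e^{-nu^2}$ with $u$ in a fixed interval $(-\alpha_1, \alpha_2)$; the remaining integral is
\[
\frac{1}{2\pi i}\int_{-\alpha_1}^{\alpha_2}\Lambda_n(\psi(iu))\,e^{-nu^2}\,\frac{i\psi'(iu)}{\psi(iu) - z}\,du,
\]
in which $\psi'(iu)$ is bounded and $|\Lambda_n(\psi(iu))| \le C\bigl(|u| + \eta_n\bigr)$ (using $\psi(iu) - 1 = O(u)$).

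The main obstacle will be the lower bound on $|\psi(iu) - z|$, since $z$ lies only a distance $|w|/\sqrt n$ from $\gamma_\theta$ at the saddle. From $(\varphi \circ \psi)(\xi) = \xi^2$ together with $\varphi'(1) = 0$ and $\varphi''(1) = \lambda$, one gets $\psi'(0)^2 = 2/\lambda$, hence $\psi(iu) - 1 = i\sqrt{2/\lambda}\,u + O(u^2)$ and in particular $\re(\psi(iu) - 1) = O(u^2)$. Thus $\re(\psi(iu) - z) = \re(\psi(iu) - 1) - \re w/\sqrt n \ge \eta/\sqrt n - C u^2$: when $|u| \le c_1 n^{-1/4}$ this already forces $|\psi(iu) - z| \ge \eta/(2\sqrt n)$, while when $|u| > c_1 n^{-1/4}$ we have $|\psi(iu) - 1| \ge \tfrac12\sqrt{2/\lambda}\,|u|$, which for $n$ large dominates $|z - 1| \le R/\sqrt n$ and yields $|\psi(iu) - z| \ge \tfrac14\sqrt{2/\lambda}\,|u| \gg n^{-1/2}$. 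In either case $|\psi(iu) - z| \ge c\,n^{-1/2}$ uniformly for $u \in (-\alpha_1, \alpha_2)$, $w \in \mathcal K$, and $n$ large. Substituting this bound, the integral over $\gamma_\theta \cap B_\epsilon(1)$ is bounded by a constant times
\[
\sqrt n\int_{-\alpha_1}^{\alpha_2}\bigl(|u| + \eta_n\bigr)e^{-nu^2}\,du \;\le\; C\bigl(n^{-1/2} + \eta_n\bigr) \;=\; o(1).
\]
Collecting the three contributions gives $F_n(1 + w/\sqrt n) - G_n(1 + w/\sqrt n) = o(1)$ uniformly on $\mathcal K$. Apart from the distance estimate, every step runs parallel to the computations already carried out in Lemmas \ref{gintlemma}, \ref{gamma2lemma}, and \ref{fnintlemma}.
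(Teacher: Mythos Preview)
Your proof is correct and follows essentially the same route as the paper: split off $\gamma\setminus\gamma_\theta$, expand the integrand on $\gamma_\theta$ via \eqref{deltatdef} into $e^{n\varphi(s)}$ plus the correction terms $(s^a-1)e^{n\varphi(s)}$ and $s^a e^{n\varphi(s)}\tilde\delta(r_n,s)$, localize to $B_\epsilon(1)$, substitute $s=\psi(iu)$, and use $|s-z|\ge c\,n^{-1/2}$. The only real differences are cosmetic --- you bundle the two correction terms into a single $\Lambda_n$ whereas the paper estimates them separately, and your two-case argument for the distance bound $|\psi(iu)-z|\ge c\,n^{-1/2}$ is more explicit than the paper's one-line appeal to $\gamma_\theta$ being vertical at $s=1$.
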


\begin{proof}
In this proof we will write $z = 1+w/\sqrt{n}$ as a shorthand, keeping in mind the implicit dependence of $z$ on $n$.

Split the integral for $F_n$ into the two pieces
\[
	F_n(z) = \frac{r_n^{-a} (\log r_n)^{-b}}{2\pi i} \left( \int_{\gamma_\theta} (e^{1/\lambda}s)^{-n} f(r_ns) \frac{ds}{s-z} + \int_{\gamma \setminus \gamma_\theta} (e^{1/\lambda}s)^{-n} f(r_ns) \frac{ds}{s-z} \right).
\]
As in the previous lemmas, the second integral here is uniformly exponentially decreasing, and we can write the integrand of the first as
\[
	e^{n\varphi(s)} + e^{n\varphi(s)} \left(s^a - 1\right) + s^a e^{n\varphi(s)} \tilde{\delta}(r_n, s),
\]
where $\tilde\delta$ is as defined in \eqref{deltatdef}, to get
\begin{align}
F_n(s) = G_n(z) + \frac{1}{2\pi i} \int_{\gamma_\theta} e^{n\varphi(s)} \left(s^a - 1\right) \frac{ds}{s-z} + \frac{1}{2\pi i} \int_{\gamma_\theta} s^a e^{n\varphi(s)}\tilde{\delta}(r_n, s) \frac{ds}{s-z} + O(e^{-cn})
\label{fngnequiv}
\end{align}
for some constant $c > 0$.  We will show that both of these remaining integrals tend to $0$ uniformly.

The contour $\gamma_\theta$ passes through the point $s=1$ vertically, so by assumption there exists a positive constant $C_2$ such that $|s-z| \geq C_1 n^{-1/2}$.  For $n$ large enough $z \notin \gamma_\theta$, and in that case we have
\[
	\left|\frac{s-1}{s-z}\right| \leq 1 + \left| \frac{1-z}{s-z} \right| \leq 1 + C_1^{-1} n^{1/2} |1-z| \leq C_2
\]
for some constant $C_2$.  We then have
\begin{align*}
\left| \int_{\gamma_\theta} e^{n\varphi(s)} \left(s^a - 1\right) \frac{ds}{s-z} \right| &\leq \int_{\gamma_\theta} e^{n\re \varphi(s)} \left| \frac{s^a - 1}{s-1} \right| \left| \frac{s-1}{s-z} \right| |ds| \\
	&\leq C_2 \int_{\gamma_\theta} e^{n\re \varphi(s)} \left| \frac{s^a - 1}{s-1} \right| |ds|,
\end{align*}
which tends to zero as $n \to \infty$.

Split the second integral in \eqref{fngnequiv} like
\[
	\int_{\gamma_\theta} = \int_{\gamma_\theta \cap B_\epsilon(1)} + \int_{\gamma_\theta \setminus B_\epsilon(1)}.
\]
The integral over $\gamma_\theta \setminus B_\epsilon(1)$ decreases exponentially.  Let $s = \psi(it)$ and let
\[
	-i\psi^{-1}(\gamma_\theta \cap B_\epsilon(1)) = (-\alpha_1,\alpha_2),
\]
so that
\begin{align*}
&\left| \int_{\gamma_\theta \cap B_\epsilon(1)} s^a e^{n\varphi(s)}\tilde{\delta}(r_n, s) \frac{ds}{s-z} \right| \\
&\qquad = \left| \int_{-\alpha_1}^{\alpha_2} e^{-nt^2} \tilde{\delta}(r_n, \psi(it)) \frac{\psi(it)^a \psi'(it)}{\psi(it) - z}\,dt \right| \\
&\qquad \leq C_1^{-1} n^{1/2} \sup_{-\alpha_1 < t < \alpha_1} \left|\tilde{\delta}(r_n, \psi(it)) \psi(it)^a\psi'(it)\right| \int_{-\alpha_1}^{\alpha_2} e^{-nt^2} \,dt \\
&\qquad < C_1^{-1} \sqrt{\pi} \sup_{-\alpha_1 < t < \alpha_1} \left|\tilde{\delta}(r_n, \psi(it)) \psi(it)^a\psi'(it)\right|,
\end{align*}
which tends to $0$ as $n \to \infty$ by our assumption on $\delta$ and, by extension, $\tilde\delta$.

Combining the above estimates with \eqref{fngnequiv} we find that
\[
	F_n(z) = G_n(z) + o(1)
\]
uniformly as $n \to \infty$.
\end{proof}

\section{Discussion of the asymptotic assumption on $f$}

The assumption in \eqref{fgrowth} that our function $f$ has only one direction of maximal exponential growth is made in part to simplify the discussion.  It should not be an issue to extend the result to entire functions which have maximal growth along a set of arguments $\theta_1,\ldots,\theta_m$ with $\theta_j \neq \theta_k \pmod \pi$ for $j \neq k$.  However, we know from our results in \cite{vargas:limitcurves} that there are entire functions which grow maximally in two opposite directions whose partial sums cannot have the asymptotic behavior described in Theorem \ref{maintheo}.

The function
\[
	f(z) = \int_{-1}^{1} (1-t) e^{zt}\,dt = \frac{e^z - e^{-z}(1+2z)}{z^2}
\]
is one such example.  This function has maximal exponential growth along the arguments $\theta = 0,\pi$.  From \cite[pp.\ 225-226]{vargas:limitcurves} we know that in the right half-plane the zeros of its scaled partial sums $p_n(nz)$ approach the Szeg\H{o} curve $|ze^{1-z}| = 1$  from the inside, and so, since the Szeg\H{o} curve comes to a right angle at the point $z=1$, asymptotically satisfy the inequality $|\arg(z-1)| > 3\pi/4$.  However, all zeros of the complementary error function $\erfc(z)$ lie in the sector $|\arg z| < 3\pi/4$, hence the zeros of the partial sums cannot be related to zeros of the complementary error function in the way guaranteed by Theorem \ref{maintheo}.  It is unclear whether the method can be modified to handle cases such as these.

\begin{acknow}
I would like to thank my supervisor, Karl Dilcher, for introducing me to this problem and Robert Milson for his encouragement and for many valuable discussions.
\end{acknow}

\bibliographystyle{amsplain}
\bibliography{biblio}

\providecommand{\bysame}{\leavevmode\hbox to3em{\hrulefill}\thinspace}
\providecommand{\MR}{\relax\ifhmode\unskip\space\fi MR }
\providecommand{\MRhref}[2]{%
  \href{http://www.ams.org/mathscinet-getitem?mr=#1}{#2}
}
\providecommand{\href}[2]{#2}
\begin{thebibliography}{10}

\bibitem{mallison:expsums}
P.~Bleher and R.~{Mallison, Jr.}, \emph{Zeros of sections of exponential sums},
  Int. Math. Res. Not. (2006), Art. ID 38937, 49.

\bibitem{edrei:paderem}
A.~Edrei, \emph{The {P}ad\'e table of functions having a finite number of
  essential singularities}, Pacific J. Math. \textbf{56} (1975), no.~2,
  429--453.

\bibitem{esv:sections}
A.~Edrei, E.~B. Saff, and R.~S. Varga, \emph{Zeros of {S}ections of {P}ower
  {S}eries}, Lecture {N}otes in {M}athematics, vol. 1002, Springer-Verlag,
  Berlin, 1983.

\bibitem{fettis:erfczeros}
H.~E. Fettis, J.~C. Caslin, and K.~R. Cramer, \emph{Complex zeros of the error
  function and of the complementary error function}, Math. Comp. \textbf{27}
  (1973), 401--407.

\bibitem{gakhov:bvp}
F.~D. Gakhov, \emph{Boundary {V}alue {P}roblems}, Translation edited by I. N.
  Sneddon, Pergamon Press, Oxford-New York-Paris; Addison-Wesley Publishing
  Co., Inc., Reading, Mass.-London, 1966.

\bibitem{iverson:zeros}
K.~E. Iverson, \emph{The zeros of the partial sums of {$e^z$}}, Math. Tables
  and Other Aids to Computation \textbf{7} (1953), 165--168.

\bibitem{norfolk:binom}
S.~Janson and T.~S. Norfolk, \emph{Zeros of sections of the binomial
  expansion}, Electron. Trans. Numer. Anal. \textbf{36} (2009/10), 27--38.

\bibitem{mclaughlin:exprh}
T.~Kriecherbauer, A.~B.~J. Kuijlaars, K.~D. T.-R. McLaughlin, and P.~D. Miller,
  \emph{Locating the zeros of partial sums of {$e^z$} with {R}iemann-{H}ilbert
  methods}, Integrable {S}ystems and {R}andom {M}atrices, Contemp. Math., vol.
  458, Amer. Math. Soc., Providence, RI, 2008, pp.~183--195.

\bibitem{marden:geom}
M.~Marden, \emph{Geometry of {P}olynomials}, Second edition. Mathematical
  Surveys, No. 3, American Mathematical Society, Providence, R.I., 1966.

\bibitem{newriv:expzeros}
D.~J. Newman and T.~J. Rivlin, \emph{The zeros of the partial sums of the
  exponential function}, J. Approx. Theory \textbf{5} (1972), 405--412.

\bibitem{newriv:expzeroscorrect}
\bysame, \emph{Correction to: ``{T}he zeros of the partial sums of the
  exponential function''}, J. Approx. Theory \textbf{16} (1976), no.~4,
  299--300.

\bibitem{norfolk:widthconj}
T.~S. Norfolk, \emph{Some observations on the {S}aff-{V}arga width conjecture},
  Rocky Mountain J. Math. \textbf{21} (1991), no.~1, 529--538.

\bibitem{norfolk:1f1}
\bysame, \emph{On the zeros of the partial sums to {${}_1F_1(1;b;z)$}}, J.
  Math. Anal. Appl. \textbf{218} (1998), no.~2, 421--438.

\bibitem{zhel:lindelof}
I.~Ostrovskii and N.~Zheltukhina, \emph{The asymptotic zero distribution of
  sections and tails of classical {L}indel\"of functions}, Math. Nachr.
  \textbf{283} (2010), no.~4, 573--587.

\bibitem{rosen:thesis}
P.~C. Rosenbloom, \emph{On sequences of polynomials, especially sections of
  power series}, Ph.D. thesis, Stanford University, 1944, Abstracts in Bull.
  Amer. Math. Soc. \textbf{48} (1942), 839; \textbf{49} (1943), 689.

\bibitem{rosen:distrib}
\bysame, \emph{Distribution of zeros of polynomials}, Lectures on Functions of
  a Complex Variable (W.~Kaplan, ed.), The University of Michigan Press, Ann
  Arbor, 1955, pp.~265--285.

\bibitem{sv:zerofree}
E.~B. Saff and R.~S. Varga, \emph{Zero-free parabolic regions for sequences of
  polynomials}, SIAM J. Math. Anal. \textbf{7} (1976), no.~3, 344--357.

\bibitem{szego:exp}
G.~Szeg\H{o}, \emph{{\"U}ber eine {E}igenschaft der {E}xponentialreihe}, Berlin
  Math. Ges. Sitzungsber. \textbf{23} (1924), 50--64.

\bibitem{vargas:limitcurves}
A.~R. Vargas, \emph{Limit curves for zeros of sections of exponential
  integrals}, Constr. Approx. \textbf{40} (2014), no.~2, 219--239.

\bibitem{zhel:mlsectails}
N.~Zheltukhina, \emph{Asymptotic zero distribution of sections and tails of
  {M}ittag-{L}effler functions}, C. R. Math. Acad. Sci. Paris \textbf{335}
  (2002), no.~2, 133--138.

\end{thebibliography}

\vspace{1cm}

\noindent {\small \textsc{Dept.\ of Mathematics and Statistics \\ Dalhousie University \\ Halifax, Nova Scotia B3H 4J5 \\ Canada}}

\noindent {\small \texttt{antoniov@mathstat.dal.ca}}

\end{document}